\newcommand{\norm}[1]{\left\Vert#1\right\Vert}
\newcommand{\abs}[1]{\left\vert#1\right\vert}
\newcommand{\inp}[1]{\left\langle#1\right\rangle}
\newcommand{\argmin}{\operatornamewithlimits{arg\,min}}
\def\d{\mathrm{d}}
\def\proj{\mathrm{Proj}}
\def\mid{\mathrm{mid}}
\theoremstyle{plain}
\newtheorem{theorem}{Theorem}[section]
\newtheorem{lemma}[theorem]{Lemma}
\newtheorem{proposition}[theorem]{Proposition}
\theoremstyle{definition}
\newtheorem{definition}[theorem]{Definition}
\theoremstyle{remark}
\newtheorem{remark}{Remark}
\title{A Quasi-Newton Subspace Trust Region Algorithm for Nonmonotone Variational Inequalities in Adversarial Learning over Box Constraints
}
\author{
  Zicheng Qiu\footnotemark[1] \\
   \And
  Jie Jiang\footnotemark[2]\; \footnotemark[4] \\
  \And 
  Xiaojun Chen\footnotemark[3]\\
}
\begin{document}
\maketitle

\footnotetext[1]{Department of Applied Mathematics, The Hong Kong Polytechnic University, Hong Kong, China. Email: \texttt{zi-cheng.qiu@polyu.hk} }

\footnotetext[2]{CAS AMSS-PolyU Joint Laboratory of Applied Mathematics, The Hong Kong Polytechnic University, Hong Kong, China. }
\footnotetext[4]{College of Mathematics and Statistics, Chongqing University, Chongqing, China. Email: \texttt{jiangjiecq@163.com}}

\footnotetext[3]{Department of Applied Mathematics, The Hong Kong Polytechnic University, Hong Kong, China. Email: \texttt{xiaojun.chen@polyu.edu.hk} }

\begin{abstract}
The first-order optimality condition of convexly constrained nonconvex nonconcave min-max optimization problems with box constraints formulates a nonmonotone variational inequality (VI), which is equivalent to a system of nonsmooth equations. In this paper, we propose a quasi-Newton subspace trust region (QNSTR) algorithm for the least squares problems defined by the smoothing approximation of nonsmooth equations. Based on the structure of the nonmonotone VI, we use an adaptive quasi-Newton formula to approximate the Hessian matrix and solve a low-dimensional strongly convex quadratic program with ellipse constraints in a subspace at each step of the QNSTR algorithm efficiently.  We prove the global convergence of the QNSTR algorithm to an $\epsilon$-first-order stationary point of the min-max optimization problem.
Moreover, we present numerical results based on the QNSTR algorithm with different subspaces for a mixed generative adversarial networks in eye image segmentation using real data to show the efficiency and effectiveness of the QNSTR algorithm for solving large-scale min-max optimization problems.
\end{abstract}

\keywords{nonmonotone variational inequality \and min-max optimization \and quasi-Newton method \and least squares problem \and generative adversarial networks}

\section{Introduction}

Min-max optimization problems have wild applications
 in  games \cite{myerson1991game}, distributional robustness optimization \cite{mohajerin2018data}, robust machine learning \cite{madry2017towards}, generative adversarial networks (GANs) \cite{goodfellow2014generative}, reinforcement learning \cite{dai2018sbeed}, distributed optimization \cite{rabbat2004distributed}, etc. Mathematically, a convexly constrained min-max optimization problem can be written as
\begin{equation}
\label{MINIMAX}
    \min_{x\in X}\max_{y\in Y}~ f(x,y):=\mathbb{E}_{P}\left[\ell(x,y,\xi)\right],
\end{equation}
where $X\subseteq \mathbb{R}^{n}$ and $Y\subseteq \mathbb{R}^{m}$ are nonempty, closed and convex sets, $\xi$ is an $s$-dimensional random vector obeying a probability distribution $P$ with support set $\Xi$, $\ell:\mathbb{R}^{n}\times\mathbb{R}^{m}\times\mathbb{R}^{s}\rightarrow\mathbb{R}$ is nonconvex-nonconcave for a fixed realization of $\xi$, i.e., $\ell(x,y,\xi)$ is neither convex in $x$ nor concave in $y$. Hence the objective function  $f(x,y)$ is also nonconvex-nonconcave in general.

In this paper, we are mainly interested in problem (\ref{MINIMAX}) arising from GANs \cite{goodfellow2014generative}, which reads
\begin{equation}\label{GAN}
\begin{aligned}
\min_{x\in X}\max_{y\in Y} \Big( \mathbb{E}_{P_1}\big[\log (D(y,\xi_1))\big] + \mathbb{E}_{P_2}\big[ \log (1- D(y,G(x,\xi_2))) \big] \Big),
\end{aligned}
\end{equation}
where $\xi_i$ is an $\mathbb{R}^{s_i}$-valued random vector with probability distribution $P_i$ for $i=1,2$, $D:\mathbb{R}^m \times \mathbb{R}^{s_1} \to (0,1)$ is a discriminator, $G:\mathbb{R}^n \times \mathbb{R}^{s_2} \to \mathbb{R}^{s_1}$ is a generator.

Generally, the generator $G$ and the discriminator $D$ are two feedforward neural networks. For example, $G$ can be a $p$-layer neural network and $D$ can be a $q$-layer neural network, that is
\begin{align*}
G(x,\xi_2)&= \sigma_G^p (W^{p}_{G}\sigma_G^{p-1}(\cdots\sigma_G^1(W_G^{1}\xi_{2}+b^{1}_{G})+\cdots)+b^{p}_{G}), \\
D(y,\xi_1)&= \sigma_D^q(W_D^q\sigma_D^{q-1}(\cdots\sigma_D^1(W_D^1 \xi_1 + b_D^1)+\cdots)+b_D^q),
\end{align*}
where $W_G^1,\cdots,W_G^p$, $b_G^1,\cdots,b_G^p$ and $W_D^1,\cdots,W_D^q$, $b_D^1,\cdots,b_D^q$ are the weight matrices, biases vectors of $G$ and $D$ with  suitable dimensions, $\sigma_G^1,\cdots,\sigma_G^p$ and $\sigma_D^1,\cdots,\sigma_D^q$ are proper activation functions, such as ReLU, GELU, Sigmoid, etc. Denote
\begin{align*}
x&:=(\mathrm{vec}(W^{1}_{G})^\top,\cdots,\mathrm{vec}(W^{p}_{G})^\top,(b^{1}_{G})^\top,\cdots,(b^{p}_{G})^\top)^\top,\\ y&:=(\mathrm{vec}(W^{1}_{D})^\top,\cdots,\mathrm{vec}(W^{q}_{D})^\top,(b^{1}_{D})^\top,\cdots,(b^{q}_{D})^\top)^\top,
\end{align*}
where $\mathrm{vec}(\cdot)$ denotes the vectorization operator. In this case, problem \eqref{GAN} reduces to problem  \eqref{MINIMAX} if let $\xi:=(\xi_1,\xi_2)\in\Xi$ and
$$\ell(x,y,\xi) := \log (D(y,\xi_1)) +  \log (1- D(y,G(x,\xi_2))).$$

Due to the nonconvexity-nonconcavity of the objective function $f$, problem \eqref{MINIMAX} may not have a saddle point. Hence the concepts of global and local saddle points are untimely to characterize the optimum of problem \eqref{MINIMAX}.  Recently, motivated by practical applications, the so-called global and local minimax points are proposed to describe the global and local optima of nonconvex-nonconcave min-max optimization problems in \cite{jin2020local} from the viewpoint of sequential games. Moreover, the optimality necessary conditions for a local minimax point are studied in \cite{jin2020local} for unconstrained  min-max optimization problems.  In \cite{dai2020optimality,jiang2023optimality},  the optimality conditions for a local minimax point are studied for constrained  min-max optimization problems.

Numerical methods for min-max optimization problems have been extensively studied. These algorithms can be divided into four classes based on the convexity or concavity of problems: the convex-concave cases (see, e.g., \cite{nemirovski2004prox,nesterov2007dual,monteiro2011complexity,tseng2008accelerated}), the nonconvex-concave cases (see, e.g., \cite{rafique2022weakly,lin2020gradient,lin2020near}), the convex-nonconcave cases (see, e.g., \cite{rafique2022weakly,lin2020gradient,lin2020near}) and the nonconvex-nonconcave cases (see, e.g., \cite{diakonikolas2021efficient,yang2020global}).

To solve problem \eqref{MINIMAX} numerically,  we first apply the sample average approximation (SAA) approach
to obtain a discrete form. We collect $N$ independent identically distributed (i.i.d.) samples of $\xi$ (e.g., generated by the Monte Carlo method), denoted by $\xi^1,\cdots,\xi^N$, and obtain a discrete counterpart of problem \eqref{MINIMAX} as below:
\begin{equation}
\label{SAA_MINIMAX}
\min_{x\in X}\max_{y\in Y} ~\hat{f}_{N}(x,y):=\frac{1}{N}\sum_{i=1}^{N}\ell(x,y,\xi^{i}).
\end{equation}

To ease the discussion, we assume that $\ell(\cdot,\cdot,\xi)$  is twice continuously differentiable with respect to $(x,y)$ for an arbitrary $\xi\in \Xi$ in what follows.

Let $z:=(x^\top, y^\top)^\top \in \mathbb{R}^{n+m}$, $Z:=X\times Y\subseteq \mathbb{R}^{n+m}$ and
\begin{equation*}
H_N(z):=
\begin{pmatrix}
\nabla_x \hat{f}_N(x,y)\\
-\nabla_y \hat{f}_N(x,y)
\end{pmatrix}.
\end{equation*}

Then the first-order optimality condition of a local minimax point for problem (\ref{SAA_MINIMAX}) can be presented as the following nonmonotone  variational inequality (VI):
\begin{equation}\label{gs10}
0\in H_N(z)+\mathcal{N}_{Z}(z),
\end{equation}
where $\mathcal{N}_{Z}(z)$ is the normal cone to the convex set $Z$ at $z$, which is defined by 
$$\mathcal{N}_Z(z):= \{v: \inp{v, u -z}\leq 0, \forall u\in Z\}.$$
We call $z^*$ a
first-order stationary point of problem (\ref{SAA_MINIMAX}) if it satisfies
(\ref{gs10}).
Due to the nonconvexity-nonconcavity, seldom algorithms can ensure the convergence to a global or local optimal point of problem \eqref{SAA_MINIMAX}. In the study of the convergence of algorithms for nonconvex-nonconcave min-max problem \eqref{SAA_MINIMAX}, some strong assumptions, such as the  Polyak-\L ojasiewicz (PL) condition \cite{yang2020global}, the existence of solutions for the corresponding Minty VI of problem \eqref{gs10} \cite{diakonikolas2021efficient} etc, are required. In fact, such assumptions are difficult to check in practice. On the other hand, some algorithms need to estimate the Lipschitz constant of $H_{N}(z)$, but the computation of the Lipschitz constant of $H_{N}(z)$ may be too costly or even intractable. In this paper, without estimating the Lipschitz constant of $H_{N}(z)$ and assuming the PL condition or the existence of solutions for Minty VI, we use a so-called quasi-Newton subspace trust region (QNSTR, for short) algorithm for solving problem \eqref{gs10}.

The VI (\ref{gs10})
can be equivalently reformulated as the following system of nonsmooth equations (see \cite{facchinei2003finite})
\begin{equation}
\label{gs15}
F_N(z):=z - {\rm Proj}_Z(z - H_N(z)) = 0,
\end{equation}
where $\mathrm{Proj}_Z(\cdot)$ denotes the projection operator onto $Z$.

Obviously, $z^*$ is a first-order stationary point of (\ref{SAA_MINIMAX}) (i.e., a solution of \eqref{gs10} or \eqref{gs15}) if  it is an optimal solution of  the following least squares problem:
\begin{equation}
\label{TRM_min}
    \min_{z\in \mathbb{R}^{n+m}} r_N(z):=\frac{1}{2}\Vert F_N(z)\Vert^{2}
\end{equation}
and $r_N(z^*)=0$, where $\|\cdot\|$ denotes the Euclidean norm.

The main contributions of this paper are summarized as follows. (i) We develop the QNSTR algorithm for solving the least squares problem (\ref{TRM_min}) when $X$ and $Y$ are boxes. Based on the structure of the VI (\ref{gs10}), we use a smoothing function to approximate the nonsmooth function $F_N$, adopt an adaptive quasi-Newton formula to approximate the Hessian matrix and solve a quadratic program with ellipse constraints in a subspace at each step of the QNSTR algorithm with a relatively low computational cost. (ii) We prove the global convergence of the QNSTR algorithm to a stationary point of a smoothing least squares problem of (\ref{TRM_min}), which is an $\epsilon$-first-order stationary point  of the min-max optimization problem \eqref{SAA_MINIMAX} if every element of the generalized Jacobian matrix of $F_N$ is nonsingular at it.
(iii) We apply the QNSTR algorithm to GANs in eye image segmentation with real data, which validates the effectiveness and efficiency of our approach for large-scale min-max optimization problems.

This paper is organized as follows. In Section \ref{Sec2}, we introduce concepts of local minimax points and first-order optimality conditions. Moreover, we investigate the asymptotic convergence between problems \eqref{MINIMAX} and \eqref{SAA_MINIMAX} to build the numerical foundation for the subsequent discussion. In Section \ref{Sec3}, we present the QNSTR algorithm and establish its global convergence. In Section \ref{Sec4}, we apply the QNSTR algorithm to solve problem \eqref{SAA_MINIMAX} with examples from eye image segmentation problems and digital handwriting image generation problems based on two different real data sets. We compare the QNSTR algorithm with some existing methods including alternating Adam. Finally, we give some concluding remarks in Section \ref{Sec5}.

\textbf{Notations.} $\mathbb{N}$ denotes the set of  natural numbers. $\Vert\cdot\Vert$ denotes the Euclidean norm of a vector or the norm of a matrix induced by the Euclidean norm. ${\rm d}(x, Y):=\inf_{y\in Y} \Vert x - y\Vert$ and ${\rm d}(X, Y):= \sup_{x\in X}\inf_{y\in Y} \Vert x-y\Vert$,  $X, Y \subseteq \mathbb{R}^{n}$.

\section{First-order stationarity and asymptotic convergence}\label{Sec2}
\label{sec:1}
In this section, we focus on the asymptotic convergence between problems \eqref{SAA_MINIMAX} and \eqref{MINIMAX} regarding to the global minimax point and the first-order stationary point. To this end, we first give some preliminaries on how to describe the optima of a min-max optimization problem.

\begin{definition}[global and local minimax points, {\cite[Definitions 9 \& 14]{jin2020local}}]\label{Def1} \hspace{0.8in}
	\begin{itemize}
		\item[(i)] $(\hat{x},\hat{y})\in X\times Y$ is called a\emph{ global minimax point} of problem \eqref{MINIMAX}, if
		$$f(\hat{x},y) \leq f(\hat{x},\hat{y}) \leq \max_{y'\in Y}f(x,y'),~\forall (x,y)\in X\times Y.$$
		
		\item[(ii)] $(\hat{x},\hat{y})\in X\times Y$ is called a \emph{local minimax point} of problem \eqref{MINIMAX}, if there exist a $\delta_0>0$ and a function $\tau:\mathbb{R}_+\rightarrow \mathbb{R}_+$ satisfying $\tau(\delta)\rightarrow 0$ as $\delta\rightarrow 0$, such that for any $\delta\in (0,\delta_0]$ and any $(x,y)\in X\times Y$ satisfying $\norm{x-\hat{x}}\leq \delta$ and $\norm{y-\hat{y}}\leq \delta$, we have
		$$f(\hat{x},y) \leq f(\hat{x},\hat{y}) \leq \max_{y'\in\{y\in Y: \norm{y-\hat{y}}\leq \tau(\delta)\}}f(x,y').$$
	\end{itemize}
\end{definition}

\begin{remark}
The concept of saddle points has been commonly used to characterize the optima of min-max problems. A point  $(\hat{x},\hat{y})\in X\times Y$ is called a \emph{saddle point} of problem \eqref{MINIMAX}, if
\begin{equation}\label{saddle}
f(\hat{x},y) \leq f(\hat{x},\hat{y}) \leq f(x,\hat{y}),~\forall (x,y)\in X\times Y,
\end{equation}
 and   $(\hat{x},\hat{y})\in X\times Y$ is called a \emph{local saddle point} of problem \eqref{MINIMAX} if (\ref{saddle}) holds in a neighborhood of $(\hat{x},\hat{y})$. However, as pointed out in \cite{jin2020local}, saddle points and local saddle points may not exist in many applications of machine learning, especially in the nonconvex-nonconcave case. Also, (local) saddle points are solutions from the viewpoint of simultaneous game, where the minimization operator and the maximization operator act simultaneously. However, many applications, such as GANs and adversarial training, seek for solutions in the sense of sequential game, where the minimization operator acts first and the maximization operator acts latter. The global and local minimax points exist under some mild conditions (see \cite[Proposition 11 and Lemma 16]{jin2020local}) and also describe the optima in the sense of sequential game.
\end{remark}

The following lemma gives the first-order necessary optimality conditions of local minimax points for problem \eqref{MINIMAX}.

\begin{lemma}[{\cite[Theorem 3.2 \& Corollary 3.1]{jiang2023optimality}}]
\label{Lem1}
If $(\hat{x},\hat{y})\in X\times Y$ is a local minimax point of problem \eqref{MINIMAX}, then we have
		\begin{equation}
		\label{gs1}
		\begin{cases}
		0\in \nabla_x f(\hat{x},\hat{y}) + \mathcal{N}_{X}(\hat{x}),\\
		0\in - \nabla_y f(\hat{x},\hat{y}) + \mathcal{N}_{Y}(\hat{y}).
		\end{cases}
		\end{equation}
\end{lemma}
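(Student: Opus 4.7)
The plan is to obtain the two inclusions in \eqref{gs1} separately by reading each one off from one of the two inequalities in Definition~\ref{Def1}(ii). The inequality $f(\hat{x},y)\leq f(\hat{x},\hat{y})$ for $y\in Y$ with $\norm{y-\hat{y}}\leq\delta$ says that $\hat{y}$ is a local maximizer of the smooth function $f(\hat{x},\cdot)$ over the convex set $Y$. The classical first-order necessary condition, applied to the equivalent local minimization of $-f(\hat{x},\cdot)$ over $Y$, then gives $0\in -\nabla_y f(\hat{x},\hat{y})+\mathcal{N}_Y(\hat{y})$, which is the second inclusion, with essentially no additional work.

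The first inclusion is the delicate part, because the right inequality couples a perturbation of $x$ with an inner maximization over a neighborhood of $\hat{y}$ whose radius $\tau(\delta)$ shrinks with $\delta$. To exploit the convexity of $X$, I would fix an arbitrary $u\in X$, put $x_t:=(1-t)\hat{x}+tu\in X$ for small $t>0$, and choose $\delta:=t\norm{u-\hat{x}}$. Continuity of $f$ together with compactness of $Y\cap\{y:\norm{y-\hat{y}}\leq\tau(\delta)\}$ yields a maximizer $y_t$ of the inner problem, so that
\[
f(\hat{x},\hat{y})\leq f(x_t,y_t),
\]
and $y_t\to\hat{y}$ as $t\downarrow 0$ because $\tau(\delta)\to 0$.

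The remainder is then a standard variational argument. Invoking the left inequality at level $\max\{\delta,\tau(\delta)\}$, which lies in $(0,\delta_0]$ for $t$ small enough, gives $f(\hat{x},y_t)\leq f(\hat{x},\hat{y})$; combined with the previous display this produces $0\leq f(x_t,y_t)-f(\hat{x},y_t)$. A single application of the mean value theorem to the $x$-argument then yields some $\theta_t\in[0,1]$ with
\[
0 \leq t\,\inp{\nabla_x f(\hat{x}+\theta_t t(u-\hat{x}),y_t),\,u-\hat{x}},
\]
and dividing by $t$ and letting $t\downarrow 0$, via the continuity of $\nabla_x f$ and $y_t\to\hat{y}$, delivers $\inp{\nabla_x f(\hat{x},\hat{y}),u-\hat{x}}\geq 0$. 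Since $u\in X$ was arbitrary and $X$ is convex, this is precisely $-\nabla_x f(\hat{x},\hat{y})\in\mathcal{N}_X(\hat{x})$, the first inclusion. The chief obstacle is the simultaneous motion of $x_t$ and $y_t$; it is absorbed by continuous differentiability of $f$ and the requirement $\tau(\delta)\to 0$ in the definition of a local minimax point.
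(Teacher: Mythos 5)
Your argument is correct, and it is worth noting that the paper itself supplies no proof of this lemma: it is imported verbatim from \cite[Theorem 3.2 \& Corollary 3.1]{JC2022optimality}, so there is no internal proof to compare against. Your reconstruction is the natural direct one and it holds together. The easy half (the $y$-inclusion) is exactly as you say: the left inequality in Definition~\ref{Def1}(ii) makes $\hat{y}$ a local maximizer of $f(\hat{x},\cdot)$ over the convex set $Y$, and the classical first-order condition gives $\nabla_y f(\hat{x},\hat{y})\in\mathcal{N}_Y(\hat{y})$. For the harder half, the key moves are all present and in the right order: taking $\delta=t\norm{u-\hat{x}}$ ties the trust radius to the step along the segment $[\hat{x},u]\subseteq X$; the inner maximizer $y_t$ exists because $Y$ is closed and the $\tau(\delta)$-ball is compact, and $y_t\to\hat{y}$ precisely because the definition forces $\tau(\delta)\to 0$; the squeeze $f(\hat{x},y_t)\leq f(\hat{x},\hat{y})\leq f(x_t,y_t)$ correctly uses the left inequality at radius $\max\{\delta,\tau(\delta)\}$, which lies in $(0,\delta_0]$ for $t$ small; and the mean value theorem plus joint continuity of $\nabla_x f$ (guaranteed by continuous differentiability) lets both arguments move simultaneously in the limit. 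Two cosmetic remarks: the case $u=\hat{x}$ should be set aside as trivial, since there $\delta=0$ is not an admissible radius but the target inequality $\inp{\nabla_x f(\hat{x},\hat{y}),u-\hat{x}}\geq 0$ holds vacuously; and one should say explicitly that $\inp{\nabla_x f(\hat{x},\hat{y}),u-\hat{x}}\geq 0$ for all $u\in X$ is, by the definition of $\mathcal{N}_X(\hat{x})$ as stated after \eqref{gs10}, exactly the inclusion $0\in\nabla_x f(\hat{x},\hat{y})+\mathcal{N}_X(\hat{x})$ --- which you do note. In short, a correct, self-contained proof of a statement the paper only cites.
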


\begin{definition}\label{Def2}
	$(\hat{x},\hat{y})\in X\times Y$ is called a \emph{first-order stationary point} of problem \eqref{MINIMAX} if \eqref{gs1} holds. $(\hat{x},\hat{y})\in X\times Y$ is called a \emph{first-order stationary point} of problem \eqref{SAA_MINIMAX} if \eqref{gs1} holds with replacing $f$ by $\hat{f}_N$.

\end{definition}

Hereafter, we will focus on finding a first-order stationary point of \eqref{SAA_MINIMAX}.

As for the exponential rate of convergence of the first-order and second-order stationary points of SAA for a specific GAN, one can refer to \cite[Proposition 4.3]{jiang2023optimality}. In what follows, we mainly focus on the almost surely convergence analysis between problems \eqref{MINIMAX} and \eqref{SAA_MINIMAX} as $N$ tends to infinity. If the problem is well-behaved and the global minimax points are achievable, we consider the convergence of global minimax points between problems \eqref{MINIMAX} and \eqref{SAA_MINIMAX}. Otherwise, the first-order stationary points (Definition \ref{Def2}) are getatable. Thus, we need also consider the convergence of first-order stationary points between problems \eqref{MINIMAX} and \eqref{SAA_MINIMAX} as $N$ tends to infinity.

Denote the optimal value, the set of global minimax points and the set of first-order stationary points of problem \eqref{MINIMAX} by $\vartheta_g$, $\mathcal{S}_{g}$ and $\mathcal{S}_{1\mathrm{st}}$, respectively. Let $\widehat{\vartheta}_g^N$, $\widehat{\mathcal{S}}_{g}^{N}$ and $\widehat{\mathcal{S}}_{1\mathrm{st}}^{N}$ denote the optimal value, the set of global minimax points and the set of first-order stationary points of problem \eqref{SAA_MINIMAX}, respectively.

\begin{lemma}\label{Lem2}
	Suppose that: (a) $X$ and $Y$ are compact sets; (b) $\ell(x,y,\xi)$ is dominated by an integrable function for every $(x,y)\in X\times Y$. Then
	\begin{align*}
	\sup_{(x,y)\in X\times Y} \abs{\hat{f}_{N}(x,y) -f(x,y)}\to 0
	\end{align*}
	w.p.1 as $N\to \infty$.
	
	If, further, (c) $\norm{\nabla_x \ell(x,y,\xi)}$ and $\norm{\nabla_y \ell(x,y,\xi) }$ are dominated by an integrable function for every $(x,y)\in X\times Y$, then
	\begin{align*}
	\sup_{(x,y)\in X\times Y} \norm{\nabla \hat{f}_{N}(x,y) - \nabla f(x,y)}\to 0
	\end{align*}
	w.p.1 as $N\to \infty$.

\end{lemma}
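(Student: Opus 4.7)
The plan is to recognize both claims as instances of the classical uniform strong law of large numbers on a compact set under continuity and an integrable envelope. I would prove the first claim by combining a pointwise strong law of large numbers (SLLN) with an equicontinuity estimate, and then obtain the second claim by applying the same argument to the vector-valued integrand $\nabla \ell(\cdot,\cdot,\xi)$.

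For the first claim, conditions (a) and (b) give continuity of $\ell(\cdot,\cdot,\xi)$ on the compact set $X\times Y$ and an integrable envelope $g$ with $\abs{\ell(x,y,\xi)} \leq g(\xi)$ for a.e. $\xi$. First, for each fixed $(x,y)$, the SLLN yields $\hat{f}_N(x,y) \to f(x,y)$ w.p.1. Next, I would control the oscillation via the modulus of continuity
$$\omega(\delta,\xi) := \sup\{\abs{\ell(x',y',\xi) - \ell(x,y,\xi)} : (x,y),(x',y')\in X\times Y,\ \norm{(x,y)-(x',y')} \leq \delta\}.$$
By (a) and compactness, $\omega(\delta,\xi) \to 0$ as $\delta \to 0$ for a.e.\ $\xi$, and $\omega(\delta,\xi) \leq 2g(\xi)$ by (b). Dominated convergence then yields $\mathbb{E}[\omega(\delta,\xi)] \to 0$ as $\delta \to 0$.

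Given $\epsilon > 0$, I would pick $\delta > 0$ with $\mathbb{E}[\omega(\delta,\xi)] < \epsilon/3$, cover $X\times Y$ by finitely many open $\delta$-balls with centers $(x_k,y_k)$, $k=1,\ldots,K$, and apply the SLLN simultaneously at these $K$ points and to the i.i.d.\ sequence $\{\omega(\delta,\xi^i)\}_i$. A triangle inequality of the form $\abs{\hat f_N(x,y) - f(x,y)} \leq \abs{\hat f_N(x,y) - \hat f_N(x_k,y_k)} + \abs{\hat f_N(x_k,y_k) - f(x_k,y_k)} + \abs{f(x_k,y_k) - f(x,y)}$ for the nearest center then bounds $\sup_{(x,y)} \abs{\hat{f}_N(x,y) - f(x,y)}$ by $\epsilon$, using the oscillation bound for the first term, the pointwise SLLN at the $K$ centers for the second, and continuity of $f = \mathbb{E}[\ell]$ (itself a consequence of (a), (b) and dominated convergence) for the third.

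The second claim follows by replacing $\ell$ with each component of $\nabla \ell$ and repeating the above argument, now using (c) and (d) to play the roles of (a) and (b); conditions (c), (d) also justify the identity $\nabla f(x,y) = \mathbb{E}[\nabla \ell(x,y,\xi)]$ via differentiation under the integral sign. The only real obstacles are the measurability and integrable domination of $\omega(\delta,\cdot)$ and the need to take the exceptional null sets in (a), (c) uniform in $(x,y)$; both issues are handled by separability of $X\times Y$ together with continuity. After those routine checks, the heart of the argument is the dominated-convergence estimate for $\mathbb{E}[\omega(\delta,\xi)]$, from which the uniform convergence is then classical.
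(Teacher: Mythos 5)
Your proof is correct: it is the standard covering/equicontinuity argument (pointwise SLLN at finitely many centers plus a dominated-convergence bound on the modulus of continuity) that underlies the classical uniform law of large numbers. The paper does not spell this out but simply invokes that result as \cite[Theorem 7.53]{SDR2014lectures}, so your argument is essentially a self-contained proof of the theorem the paper cites, applied first to $\ell$ and then componentwise to $\nabla\ell$.
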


\begin{proof}
Since the samples are i.i.d. and $X$ and $Y$ are compact, it is known from \cite[Theorem 7.53]{shapiro2021lectures} that the above uniform convergence results hold.\qed
\end{proof}

The following proposition gives the nonemptiness of $\widehat{\mathcal{S}}_{g}^{N}$, $\mathcal{S}_{g}$, $\widehat{\mathcal{S}}_{\mathrm{1st}}^{N}$ and $\mathcal{S}_{\mathrm{1st}}$.

\begin{proposition}\label{Prop1}
	If conditions (a)-(c) in Lemma \ref{Lem2} hold, then $\mathcal{S}_{g}$ and $\mathcal{S}_{\mathrm{1st}}$ are nonempty and $\widehat{\mathcal{S}}_{g}^{N}$ and $\widehat{\mathcal{S}}_{\mathrm{1st}}^{N}$ are nonempty for any $N\in \mathbb{N}$.
\end{proposition}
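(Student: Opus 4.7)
The plan is to treat the four nonemptiness claims in two pairs: first the global minimax points for $f$ and $\hat{f}_N$, then the first-order stationary points for both. Throughout I rely on compactness of $X\times Y$ (built into condition (a) of Lemma \ref{Lem2}) and on joint continuous differentiability of $f$ and $\hat{f}_N$. Continuous differentiability of $\hat{f}_N$ is immediate from (c), since $\hat{f}_N$ is a finite average of $C^1$ functions. For $f$, conditions (c)--(d) together with the dominated convergence theorem let me differentiate under the expectation, yielding $\nabla f(x,y)=\mathbb{E}_{P}[\nabla\ell(x,y,\xi)]$; a second application of dominated convergence delivers continuity of $\nabla f$ on the compact set $X\times Y$.

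For nonemptiness of $\mathcal{S}_g$, I would argue by direct construction in the spirit of \cite[Proposition 11]{JNJ2020local}. Define $\varphi(x):=\max_{y\in Y}f(x,y)$. Continuity of $f$ and compactness of $Y$ make the maximum attained for every $x$, and Berge's maximum theorem makes $\varphi$ continuous on the compact set $X$; hence $\varphi$ attains its minimum at some $\hat{x}\in X$. Choosing any maximizer $\hat{y}$ of $f(\hat{x},\cdot)$ over $Y$, both inequalities in Definition \ref{Def1}(i) hold by construction, so $(\hat{x},\hat{y})\in\mathcal{S}_g$. The identical argument applied to $\hat{f}_N$, which is continuous on the same compact set, gives $\widehat{\mathcal{S}}_g^N\neq\emptyset$ for every $N$.

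For the first-order stationary points I would avoid the detour through ``global minimax $\Rightarrow$ local minimax,'' which needs extra uniqueness-type hypotheses, and instead work directly with the fixed-point reformulation in \eqref{gs15}. Set $H(z):=(\nabla_x f(x,y)^\top,-\nabla_y f(x,y)^\top)^\top$ with $z=(x^\top,y^\top)^\top$; then \eqref{gs1} is equivalent to $z=\mathrm{Proj}_Z(z-H(z))$ on $Z=X\times Y$. Continuity of $\nabla f$ (established above) and nonexpansiveness of the Euclidean projection onto the nonempty closed convex set $Z$ make $\Psi(z):=\mathrm{Proj}_Z(z-H(z))$ a continuous self-map of the nonempty convex compact set $Z$. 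Brouwer's fixed point theorem yields a fixed point of $\Psi$, which lies in $\mathcal{S}_{1\mathrm{st}}$. Replacing $H$ by $H_N$, continuous for the same reason, produces $\widehat{\mathcal{S}}_{1\mathrm{st}}^N\neq\emptyset$ for every $N$.

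The only nontrivial ingredient is the interchange of differentiation and expectation that underlies continuity of $\nabla f$; once this is secured via the integrable domination in (d), every remaining step is a standard compactness or Brouwer-type argument built on top of the variational-inequality reformulation \eqref{gs15} already available in the paper.
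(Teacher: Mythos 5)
Your proof is correct and takes essentially the same route as the paper: the paper establishes nonemptiness of $\mathcal{S}_{g}$ and $\widehat{\mathcal{S}}_{g}^{N}$ by citing \cite[Proposition 11]{JNJ2020local}, which is exactly your minimize-the-value-function construction for $\varphi(x)=\max_{y\in Y}f(x,y)$, and nonemptiness of $\mathcal{S}_{1\mathrm{st}}$ and $\widehat{\mathcal{S}}_{1\mathrm{st}}^{N}$ by citing \cite[Corollary 2.2.5]{FP2007finite}, whose proof is precisely your Brouwer fixed-point argument applied to $z\mapsto \mathrm{Proj}_Z(z-H(z))$ on the compact convex set $Z$. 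The only difference is that you unwind these citations into their standard proofs and additionally justify the continuity of $\nabla f$ from conditions (c)--(d) via dominated convergence, a point the paper leaves implicit in its standing smoothness assumption.
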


\begin{proof}
	Since the continuity of $f(x,y)$ and $\hat{f}_{N}(x,y)$ w.r.t. $(x,y)$ and the boundedness of $X$ and $Y$, we know from \cite[Proposition 11]{jin2020local} the nonemptiness of $\mathcal{S}_{g}$ and $\widehat{\mathcal{S}}_{g}^{N}$. Note that both $\mathcal{S}_{\mathrm{1st}}$ and $\widehat{\mathcal{S}}_{\mathrm{1st}}^{N}$ are solutions of variational inequalities. Then we have from \cite[Corollary 2.2.5]{facchinei2003finite} that $\mathcal{S}_{\mathrm{1st}}$ and $\widehat{\mathcal{S}}_{\mathrm{1st}}^{N}$ are nonempty.\qed
\end{proof}

Based on the uniform laws of large numbers in Lemma \ref{Lem2}, we have the following convergence results.

\begin{theorem}\label{Th3}
	Let conditions (a)-(c) in Lemma \ref{Lem2} hold. Then
	\begin{align}
	&\d\left(\widehat{\mathcal{S}}_{g}^{N}, \mathcal{S}_{g} \right)\to 0, \label{gs3-1}\\
	&\d\left(\widehat{\mathcal{S}}_{1\mathrm{st}}^{N}, \mathcal{S}_{1\mathrm{st}} \right)\to 0,  \label{gs3-2}
	\end{align}
	w.p.1 as $N\to \infty$.
\end{theorem}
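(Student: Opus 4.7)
The plan is to prove each of the two distance convergences by a standard contradiction/subsequence argument, exploiting the compactness of $X\times Y$ together with the uniform convergence results of Lemma \ref{Lem2}. Assume toward contradiction that $\d(\widehat{\mathcal{S}}_g^N,\mathcal{S}_g)\not\to 0$ on some event of positive probability; then along a subsequence $\{N_k\}$ one can extract $(\hat x^{N_k},\hat y^{N_k})\in \widehat{\mathcal{S}}_g^{N_k}$ bounded away from $\mathcal{S}_g$. Compactness of $X\times Y$ yields a further subsequence converging to some $(x^*,y^*)\in X\times Y$, and the goal is to show $(x^*,y^*)\in\mathcal{S}_g$, which contradicts the separation.

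For \eqref{gs3-1}, I would start from the global minimax inequalities for $\hat f_{N_k}$, namely
\[
\hat f_{N_k}(\hat x^{N_k},y)\le \hat f_{N_k}(\hat x^{N_k},\hat y^{N_k})\le \max_{y'\in Y}\hat f_{N_k}(x,y'),\qquad \forall (x,y)\in X\times Y,
\]
and pass to the limit along the convergent subsequence. The left-hand inequality converges by continuity of $f$ and the first (uniform function) part of Lemma \ref{Lem2}. For the right-hand inequality, I would use that uniform convergence of $\hat f_{N_k}$ to $f$ on $X\times Y$ implies $\max_{y'\in Y}\hat f_{N_k}(x,y')\to \max_{y'\in Y} f(x,y')$ for each fixed $x$, giving $f(x^*,y)\le f(x^*,y^*)\le \max_{y'\in Y} f(x,y')$ for all $(x,y)$. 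This is exactly the definition of a global minimax point of \eqref{MINIMAX}, so $(x^*,y^*)\in\mathcal{S}_g$.

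For \eqref{gs3-2}, I would work directly with the variational inequality characterization. Each $(\hat x^{N_k},\hat y^{N_k})\in\widehat{\mathcal{S}}_{1\mathrm{st}}^{N_k}$ satisfies
\[
-\nabla_x\hat f_{N_k}(\hat x^{N_k},\hat y^{N_k})\in\mathcal{N}_X(\hat x^{N_k}),\qquad \nabla_y\hat f_{N_k}(\hat x^{N_k},\hat y^{N_k})\in\mathcal{N}_Y(\hat y^{N_k}).
\]
The uniform convergence of the gradients from the second half of Lemma \ref{Lem2}, combined with convergence of $(\hat x^{N_k},\hat y^{N_k})\to(x^*,y^*)$, gives convergence of the left-hand sides to $-\nabla_x f(x^*,y^*)$ and $\nabla_y f(x^*,y^*)$ respectively. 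Since $X$ and $Y$ are closed convex, the normal-cone multifunctions $\mathcal{N}_X$ and $\mathcal{N}_Y$ have closed graphs, so the inclusions pass to the limit. This yields \eqref{gs1} at $(x^*,y^*)$, i.e.\ $(x^*,y^*)\in\mathcal{S}_{1\mathrm{st}}$, contradicting the separation.

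The main obstacle is the first part, specifically the inner-max step: one needs that $\max_{y'\in Y}\hat f_{N}(x,y')$ converges to $\max_{y'\in Y} f(x,y')$ for each $x$, which is not automatic but follows cleanly from the \emph{uniform} (not merely pointwise) convergence in Lemma \ref{Lem2} and an $\varepsilon/2$-splitting argument, since $|\max_{y'}\hat f_N(x,y')-\max_{y'}f(x,y')|\le \sup_{(x,y)}|\hat f_N(x,y)-f(x,y)|$. The second part is more mechanical once one invokes the closed-graph property of normal cones to convex sets. Both arguments are executed on the almost-sure event (of probability one) on which the uniform convergences of Lemma \ref{Lem2} hold, which delivers the w.p.1 conclusion.
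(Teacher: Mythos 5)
Your argument is correct, but it follows a genuinely different route from the paper's on both halves. For \eqref{gs3-1}, the paper introduces the marginal functions $\varphi(x)=\max_{y\in Y}f(x,y)$ and $\hat\varphi_N(x)=\max_{y\in Y}\hat f_N(x,y)$, deduces $\max_{x\in X}|\hat\varphi_N(x)-\varphi(x)|\to 0$ from Lemma \ref{Lem2}, invokes \cite[Lemma 4.1]{X2010uniform} to get convergence of the argmin sets $\proj_x\widehat{\mathcal{S}}_g^N\to\proj_x\mathcal{S}_g$, and then combines this with convergence of the optimal values to conclude that the limit point lies in $\mathcal{S}_g$; you instead pass to the limit directly in the two defining inequalities of a global minimax point, using the same $\sup$-bound $|\max_{y'}\hat f_N(x,y')-\max_{y'}f(x,y')|\le\sup_{(x,y)}|\hat f_N-f|$ that the paper uses to control $\hat\varphi_N-\varphi$. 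Your version is more self-contained (no appeal to the external stability lemma for argmin sets), while the paper's version yields convergence of the optimal values $\widehat{\vartheta}_g^N\to\vartheta_g$ as a byproduct. For \eqref{gs3-2}, the paper simply cites \cite[Proposition 19]{S2003monte}, whereas you reprove the needed stability of generalized equations from scratch via outer semicontinuity (closed graph) of the normal-cone map to a closed convex set together with the uniform gradient convergence in Lemma \ref{Lem2}; this is essentially the content of the cited result specialized to the present setting, and it is sound provided you note (as the hypotheses (c)--(d) guarantee) that $\nabla f$ is continuous so that $\nabla\hat f_{N_k}(\hat x^{N_k},\hat y^{N_k})\to\nabla f(x^*,y^*)$. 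The contradiction/subsequence framing versus the paper's direct "take a convergent (sub)sequence and show the limit is a solution" framing is an immaterial difference: both reduce to the same compactness argument on the probability-one event where the uniform convergences hold.
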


\begin{proof}
	\eqref{gs3-2} follows from \cite[Proposition 19]{shapiro2003monte} directly. Thus, in what follows, we only consider \eqref{gs3-1}.

	From Proposition \ref{Prop1}, we know that $\widehat{\mathcal{S}}_{g}^{N}$ and $\mathcal{S}_{g}$ are nonempty for any $N\in\mathbb{N}$. Let $z^{N}=(x^{N}, y^{N}) \in \widehat{\mathcal{S}}_{g}^{N}$ and $z^{N} \to \bar{z}=(\bar{x}, \bar{y})$ w.p.1 as $N\to \infty$. Then we just verify that $\bar{z}\in  \mathcal{S}_{g}$ w.p.1. If $\{z^{N}\}$ is not a convergent sequence, due to the boundedness of $X$ and $Y$, we can choose a convergent subsequence.
	Denote $\varphi(x):=\max_{y\in Y} f(x,y)$ and $\hat{\varphi}_{N}(x):=\max_{y\in Y} \hat{f}_{N}(x,y)$.
	Note that
\begin{equation}
\label{zgs1}
	\begin{aligned}
	\max_{x\in X}\abs{\hat{\varphi}_{N}(x) - \varphi(x)} & = \max_{x\in X}\abs{ \max_{y\in Y} \hat{f}_{N}(x,y) - \max_{y\in Y} f(x,y)}\\
	&\leq \max_{(x,y)\in X\times Y} \abs{\hat{f}_{N}(x,y) - f(x,y) }\\
	& \to 0
	\end{aligned}
\end{equation}
	w.p.1 as $N\to \infty$, where the last convergence assertion follows from Lemma \ref{Lem2}.

Next, we show
\begin{equation}
\label{zgs2}
\proj_x \widehat{\mathcal{S}}_{g}^{N}=\argmin_{x\in X} \hat{\varphi}_{N}(x)~~ \text{and} ~~ \proj_x \mathcal{S}_{g}=\argmin_{x\in X} \varphi(x),
\end{equation}
where $\proj_x$ denotes the projection onto the $x$'s space.  Based on the definition of global minimax points, we have, for any $(\hat{x},\hat{y})\in \mathcal{S}_{g}$, that
$$f(\hat{x},y)\le f(\hat{x},\hat{y}) \leq \max_{y'\in Y}f(x,y'),~\forall (x,y)\in X\times Y,$$
which implies
$$\varphi(\hat{x}) = \max_{y\in Y}f(\hat{x},y) \leq \max_{y'\in Y}f(x,y')= \varphi(x), ~\forall x \in X.$$
This means $\mathrm{Proj}_{x}\mathcal{S}_{g}\subseteq \argmin_{x\in X}\varphi(x)$. On the other hand, for any $\hat{x}\in \argmin_{x\in X}\varphi(x)$, we let $\hat{y} \in \arg\max_{y\in Y}f(\hat{x},y)$. Then it is not difficult to examine that $(\hat{x},\hat{y})$ is a global minimax point, i.e., $\argmin_{x\in X}\varphi(x) \subseteq \mathrm{Proj}_{x}\mathcal{S}_{g}$. The $\proj_x \widehat{\mathcal{S}}_{g}^{N}=\argmin_{x\in X} \hat{\varphi}_{N}(x)$ can be similarly verified. Hence (\ref{zgs2}) holds.

Then \eqref{zgs1} and \eqref{zgs2} indicates, according to \cite[Lemma 4.1]{xu2010uniform}, that
	\begin{equation}
	\label{gs4}
	\d\left( \proj_x \widehat{\mathcal{S}}_{g}^{N} , \proj_x \mathcal{S}_{g} \right) \to 0
	\end{equation}
	w.p.1 as $N\to \infty$. We know from \eqref{gs4} that $\bar{x}\in \proj_x \mathcal{S}_{g}$.
	
	Moreover, we know that
	\begin{align*}
	\abs{\widehat{\vartheta}_g^{N} - \vartheta_g} &= \abs{ \min_{x\in X}\hat{\varphi}_{N}(x) - \min_{x\in X}\varphi(x) }\\ &\leq \max_{x\in X}\abs{\hat{\varphi}_{N}(x) - \varphi(x)} \\
	&\to 0
	\end{align*}
	w.p.1 as $N\to \infty$, where $\vartheta_g$ and $\widehat{\vartheta}_g^{N}$ are optimal values of problems \eqref{MINIMAX} and \eqref{SAA_MINIMAX}, respectively. Due to Lemma \ref{Lem2} and the continuity of $f$, we know that
	\begin{align*}
	\abs{ \hat{f}_{N}(x^{N},y^{N})  -  f(\bar{x},\bar{y}) } & \leq  \abs{ \hat{f}_{N}(x^{N},y^{N})  -  f(x^{N},y^{N})} + \abs{ f(x^{N},y^{N}) - f(\bar{x},\bar{y}) }\\
	&\to 0.
	\end{align*}
	Since $\widehat{\vartheta}_g^{N} = \hat{f}_{N}(x^{N},y^{N})$, we know that $\vartheta_g = f(\bar{x},\bar{y})$, which, together with $\bar{x}\in \proj_x \mathcal{S}_{g}$, implies that $(\bar{x},\bar{y})\in  \mathcal{S}_{g}$.\qed
\end{proof}

Based on Theorem \ref{Th3}, it is well-founded for us to employ problem \eqref{SAA_MINIMAX} to approximately solve problem \eqref{MINIMAX}. In the sequel, we will focus on how to compute an $\epsilon$-first-order stationary point of problem \eqref{SAA_MINIMAX}.

\section{The QNSTR algorithm and its convergence analysis}\label{Sec3}

In this section, we propose the QNSTR algorithm to compute an $\epsilon$-first-order stationary point of problem \eqref{SAA_MINIMAX} with a fixed sample size $N$.
In the remainder of this paper, let $X=[a,b]$ and $Y=[c,d]$, where $a,b\in \mathbb{R}^n$,  $c,d\in \mathbb{R}^m$ with
$a< b$ and $c< d$ in the componentwise sense.  In this case, the projection in (\ref{gs15}) has a closed form and
the function $F_N$ can be written as

\begin{equation}\label{boxF}
F_N(z)=z - {\rm mid}(l, u, z - H_N(z)),
\end{equation}
\\
where $l,u\in \mathbb{R}^{n+m}$ with $l=(a^\top, c^\top)^\top$ and $u=(b^\top, d^\top)^\top$, ``mid'' is the middle operator in the componentwise sense, that is
$${\rm mid}(l,u, z-H_N(z))_i= \left\{\begin{array}{ll}
l_i, & {\rm if} \quad   (z - H_N(z))_i < l_i,\\
u_i, &  {\rm if} \quad  (z - H_N(z))_i > u_i,  ~ i=1,\cdots, n+m,\\
(z - H_N(z))_i,  & {\rm otherwise}.
\end{array} \right.
$$
\\
Since $X$ and $Y$ are boxes, \eqref{boxF} can be divided into two parts separably and rewritten as

\begin{equation}\label{boxF1}
    F_{N}(z)=\left(\begin{array}{l}
    F^{\mathbbm{1}}_{N}(z)\\
F^{\mathbbm{2}}_{N}(z)\end{array}\right)= \left(\begin{array}{l}
x-\mid(a,b,x-\nabla_{x}\hat{f}_{N}(x,y))\\
y-\mid(c,d,y+\nabla_{y}\hat{f}_{N}(x,y))
\end{array}\right).
\end{equation}

\subsection{Smoothing approximation}
\label{subset3}
Let $q(z)=z-H_N(z).$ The function $F_N$ is not differentiable at $z$ when $q_i(z) = l_i$ or $q_i(z) = u_i$ for some $1\le i\le n+m$. To overcome the difficulty in computation of the generalized Hessian of the nonsmooth function $F_N(z)$, we consider its smoothing approximation
	\begin{equation}
    \label{smooth}
      (F_{N,\mu})_i(z) =
	\begin{cases}
	\frac{1}{2}((H_N)_i(z) + z_i) + \frac{1}{2\mu} (u_i-q_i(z))^2 + \frac{\mu}{8} - \frac{u_i}{2}, &\text{if}~ \abs{u_i -q_i(z)}\leq \frac{\mu}{2},\\
	\frac{1}{2}((H_N)_i(z) + z_i)  - \frac{1}{2\mu} (l_i - q_i(z))^2 - \frac{\mu}{8} - \frac{l_i}{2}, &\text{if}~ \abs{l_i -q_i(z)}\leq \frac{\mu}{2},\\
	(F_N)_i(z), & \text{otherwise},
	\end{cases}
    \end{equation}
where
$0<\mu\leq \hat{\mu}:=\min_{1\leq i\leq n+m} (u_i - l_i)$.

From (\ref{boxF1}), the smoothing function $F_{N,\mu}(z)$ can also be represented as
\begin{equation}\label{boxF2}
    F_{N,\mu}(z)=\left(\begin{array}{l}
    F^{\mathbbm{1}}_{N,\mu}(z)\\
F^{\mathbbm{2}}_{N,\mu}(z)\end{array}\right),
\end{equation}
where
$F^{\mathbbm{1}}_{N,\mu}(z)$ and $F^{\mathbbm{2}}_{N,\mu}(z)$ are the smoothing approximations of $F_{N}^{\mathbbm{1}}(z)$ and $F_{N}^{\mathbbm{2}}(z)$, respectively.

We summarize some useful properties of the smoothing function $F_{N,\mu}$, which can be found in \cite{chen1998global} and \cite[Section 6]{chen1997superlinear}.

\begin{lemma}\label{Lem4}
Let $F_{N,\mu}$ be a smoothing function of $F_N$ defined in \eqref{smooth}. Then for any $\mu \in (0, \hat{\mu})$, $F_{N,\mu}$ is continuously differentiable and has the following properties.
	\begin{enumerate}
		\item[(i)]  There is a $\kappa>0$ such that for any $z\in \mathbb{R}^{m+n}$ and $\mu>0$,
$$\|F_{N,\mu}(z)-F_N(z)\|\le \kappa \mu.$$

\item[(ii)]
For any $z\in \mathbb{R}^{m+n}$, we have
$$\lim_{\mu \downarrow 0} {\rm d}(\nabla_{z} F_{N,\mu}(z), \partial_C F_N(z))=0,$$
where $\partial_CF_N(z)=\partial (F_N(z))_1 \times \partial (F_N(z))_2\times \cdots \times  \partial (F_N(z))_{n+m},$ and
$\partial (F_N(z))_i$ is the Clarke generalized gradient of $(F_N(\cdot))_i$ at $z$ for $i=1,\ldots, n+m$. Moreover, there exists a $\bar{\mu}>0$ such that for any $\mu\in (0, \bar{\mu})$, we have
$\nabla F_{N,\mu}(z)\in  \partial_C F_N(z).$
	\end{enumerate}
\end{lemma}

In Figure \ref{fig_exp} in Appendix A,  we show the approximation error $\| F_{N,\mu}(z)-F_{N}(z)\|$ over $X\times Y$ as $\mu\downarrow 0$ with different $N$.

\begin{definition}[$\epsilon$-first-order stationary points]
\label{Def_efosp}
For given $\epsilon>0$, a point $z$ is called an \emph{$\epsilon$-first-order stationary point} of problem \eqref{SAA_MINIMAX}, if $\norm{F_N(z)} \le \epsilon$.
\end{definition}
From (i) of Lemma \ref{Lem4},
if $z^*$ is an $\epsilon$-first-order stationary point of problem \eqref{SAA_MINIMAX}, i.e., $\norm{F_N(z^*)} \leq \epsilon$, then for any $\mu \in (0,  \frac{\epsilon}{\kappa})$, we have
\begin{equation}
\label{kappa}
\|F_{N,\mu}(z^*)\|-\|F_N(z^*)\|\le \|F_{N,\mu}(z^*)-F_N(z^*)\| \le \kappa \mu \le \epsilon,
\end{equation}
which implies $\|F_{N,\mu}(z^*)\|\le \|F_N(z^*)\|+ \epsilon \le 2\epsilon.$  On the other hand, if $z^*$ satisfies $\|F_{N,\mu}(z^*)\|\le \frac{\epsilon}{2}$ for some $\epsilon >0$ and $\mu\in (0, \frac{\epsilon}{2\kappa})$, then we have
$$\|F_N(z^*)\|-\|F_{N,\mu}(z^*)\|\le \|F_{N,\mu}(z^*)-F_N(z^*)\| \le \kappa \mu \le \frac{\epsilon}{2},$$
which implies $\|F_N(z^*)\|\le \|F_{N,\mu}(z^*)\|+ \frac{\epsilon}{2}\le \epsilon$, that is, $z^*$ is an $\epsilon$-first-order stationary point of problem \eqref{SAA_MINIMAX}.

Now we consider the smoothing least squares problem with a fixed small smoothing parameter $\mu>0$:
 \begin{equation}
\label{TRM_min_smooth}
    \min_{z\in \mathbb{R}^{n+m}} r_{N,\mu}(z):=\frac{1}{2}\Vert F_{N,\mu}(z )\Vert^{2}.
\end{equation}

Let $J_{N,\mu}(z)$ be the Jacobian matrix of $F_{N,\mu}(z)$. The gradient of the function $r_{N,\mu}$ is
$$\nabla r_{N,\mu}(z)=J_{N,\mu}(z)^\top F_{N,\mu}(z).$$
A vector $z^*$  is called a first-order stationary point of problem (\ref{TRM_min_smooth}) if $\nabla r_{N,\mu}(z^*)=0$.  If $J_{N,\mu}(z^*)$ is nonsingular, then $ F_{N,\mu}(z^*)=0$.  From (i) of Lemma \ref{Lem4}, $\|F_N(z^*)\|=\|F_N(z^*)-F_{N,\mu}(z^*)\|\le\kappa \mu\le \epsilon$
when $\mu\in (0, \epsilon/\kappa).$ This means that a first-order stationary point $z^*$ of problem (\ref{TRM_min_smooth}) is an $\epsilon$-first-order stationary point of problem \eqref{SAA_MINIMAX} if  $\mu\in (0, \epsilon/\kappa)$ and $J_{N,\mu}(z^*)$ is nonsingular.
Note that $\partial_CF_N(z)$  is a compact set for any $z\in X\times Y$. From (ii) of Lemma \ref{Lem4}, if  all matrices in $\partial_CF_N(z^*)$ are nonsingular, then there is $\mu_0 >0$ such that for any $\mu\in (0, \mu_0)$, $J_{N,\mu}(z^*)$ is nonsingular.

If $J_{N,\mu}(z^{*})$ is singular, the  assumptions of local convergence theorems in
\cite{Dennis,Gratton2007} for Gauss-Newton methods to solve the least squares problem (\ref{TRM_min_smooth}) fail. In the next subsection, we prove the convergence of the QNSRT algorithm  for
the least squares problem (\ref{TRM_min_smooth})
to a stationary point  of (\ref{TRM_min_smooth}) without assuming the nonsigularity of $J_{N,\mu}(z^{*})$.

\subsection{The QNSRT algorithm}

In this subsection, we present the QNSRT algorithm with a fixed sample size $N$ and a fixed smoothing parameter $\mu$.  For simplicity, in this subsection,
 we use $F(z)$, $J(z)$ and $r(z)$ to denote $F_{N,\mu}(z) $, $J_{N,\mu}(z)$ and $r_{N,\mu}(z)$, respectively. Moreover, we use $F^{\mathbbm{1}}(z)$, $F^{\mathbbm{2}}(z)$ to represent $F^{\mathbbm{1}}_{N,\mu}(z)$ and $F^{\mathbbm{2}}_{N,\mu}(z)$, respectively.

For an arbitrary point $z_0\in X\times Y$ and a positive number $R_0$, we denote  the level set  $\bar{S}:=\{z\in\mathbb{R}^{n+m}\, | \, r(z)\le r(z_0)\}$ and  define  a set $S(R_{0}):=\{z\in\mathbb{R}^{n+m} \, |\, \Vert z-z'\Vert\leq R_{0},\forall z'\in \bar{S}\}$. By the definition of $F$ and the boundedness of $X$ and $Y$, we have $r(z)=\frac{1}{2}\|F(z)\|^2 \to \infty$ if $\|z\|\to \infty$. Hence both $\bar{S}$ and $S(R_0)$ are bounded.

  Let $J_{1}(z)=\nabla F^{\mathbbm{1}}(z)$  and $J_{2}(z)=\nabla F^{\mathbbm{2}}(z)$.  Then from
$$r(z):=\frac{1}{2}\Vert F^{\mathbbm{1}}(z)\Vert^{2}+\frac{1}{2}\Vert F^{\mathbbm{2}}(z)\Vert^{2},$$
if $F^{\mathbbm{1}}$ and $F^{\mathbbm{2}}$ are twice differentiable at $z$, the Hessian matrix
$$
    \nabla^{2}r(z)=\frac{1}{2}\nabla^{2}\Vert F^{\mathbbm{1}}(z)\Vert^{2}+\frac{1}{2}\nabla^{2}\Vert F^{\mathbbm{2}}(z)\Vert^{2},
$$
can be written as
\begin{align}\label{gs12}
\nabla^2 \Vert F^{\mathbbm{1}}(z)\Vert^{2}&= J_{1}(z)^\top J_{1}(z) +\sum_{i=1}^{n} (F^{\mathbbm{1}})_i(z)\nabla^2(F^{\mathbbm{1}})_i(z),\\
\label{gs12_2}
\nabla^2 \Vert F^{\mathbbm{2}}(z)\Vert^{2}&= J_{2}(z)^\top J_{2}(z) +\sum_{i=1}^{m} (F^{\mathbbm{2}})_i(z)\nabla^2(F^{\mathbbm{2}})_i(z).
\end{align}

If $F^{\mathbbm{1}}$ and $F^{\mathbbm{2}}$ are not twice differentiable at $z$, the generalized Hessian of $r$ at $z$, denote $\partial (\nabla r(z)),$ is the convex hull of all $(m+n)\times (m+n)$ matrices obtained as the limit of a sequence of the form  $\nabla^{2}r(z^k)$, where $z^k \to z$ and $F^{\mathbbm{1}}$ and $F^{\mathbbm{2}}$ are twice differentiable at $z^k$ \cite{Clarke}. Hence from (\ref{gs12})-(\ref{gs12_2}) and the twice continuous differentiability of $\hat{f}_N$, we know that there is a positive number $M_1$ such that
$\|H\|\le M_1$ for any $H\in\partial (\nabla r(z))$,  $z\in S(R_0)$. Moreover from \cite[Proposition 2.6.5]{Clarke},
 there is a positive number $M_2$ such that
\begin{equation}\label{M-Lipscitz}
\norm{\nabla r(z) - \nabla r(z')} \leq M_2 \norm{z - z'}, \,\, \forall z, z' \in S(R_0).
\end{equation}

To give a globally convergent algorithm for problem (\ref{TRM_min_smooth}) without using the second derivatives, we keep the term $J_{1}(z)^{\top}J_{1}(z)$ and $J_{2}(z)^{\top}J_{2}(z)$ in \eqref{gs12}-\eqref{gs12_2}, and approximate
$$\sum_{i=1}^{n} (F^{\mathbbm{1}})_i(z)\nabla^2 (F^{\mathbbm{1}})_i(z) ~~ \text{and} ~~
\sum_{i=1}^{m} (F^{\mathbbm{2}})_i(z)\nabla^2(F^{\mathbbm{2}})_i(z).$$
Specifically, the Hessian matrix at the $k$-th iteration point $z_{k}$ is approximated by $H_k$ with
\begin{equation}
\label{GN_BFGS}
        H_{k}=J_{1}(z_{k})^{\top}J_{1}(z_{k})+J_{2}(z_{k})^{\top}J_{2}(z_{k})+A_{k},
\end{equation}
where

\begin{equation*}
    A_{k}=\begin{pmatrix}
            B_{k} & O \\
            O & C_{k}
        \end{pmatrix}.
\end{equation*}
Here the matrices $B_{k}$ and $C_k$ are computed by the truncated BFGS quasi-Newton formula as follows.
\begin{equation}
    \label{GN_SNFGS1}
    B_{k+1}=\begin{cases}
        \bar{B}_{k+1}& \,\ \text{if}\,\, \|\bar{B}_{k+1}\|\le \gamma \,\, \& \,\,
        \frac{(s^{\mathbbm{1}}_k)^\top v^{\mathbbm{1}}_k}{(s^{\mathbbm{1}}_k)^\top s^{\mathbbm{1}}_k }\ge \bar{\epsilon}\\
        \|F^{\mathbbm{1}}(z_{k+1}) \|I_{n}& \,\,\text{otherwise,}
    \end{cases}
    \end{equation}
    \begin{equation}
 \label{GN_SNFGS2}
    C_{k+1}=\begin{cases}
        \bar{C}_{k+1}& \,\, \text{if} \,\, \|\bar{B}_{k+1}\|\le \gamma \,\, \& \,\, \,\frac{(s^{\mathbbm{2}}_k)^\top v^{\mathbbm{2}}_k}{(s^{\mathbbm{2}}_k)^\top s^{\mathbbm{2}}_k}\ge \bar{\epsilon} \\
         \|F^{\mathbbm{2}}(z_{k+1}) \|I_m & \,\,\text{otherwise,}
    \end{cases}
\end{equation}
where
\begin{equation}
\label{GN_SBFGS11}
\bar{B}_{k+1}=
B_{k}-\frac{B_{k}s^{\mathbbm{1}}_{k}(s^{\mathbbm{1}}_{k})^{\top}B^{\top}_{k}}{(s^{\mathbbm{1}}_{k})^{\top}B_{k}s^{\mathbbm{1}}_{k}}+\frac{v^{\mathbbm{1}}_{k}(v^{\mathbbm{1}}_{k})^{\top}}{(v^{\mathbbm{1}}_{k})^{\top}s^{\mathbbm{1}}_{k}}
\end{equation}
and
\begin{equation}
\label{GN_SBFGS21}
\bar{C}_{k+1}=C_{k}-\frac{C_{k}s^{\mathbbm{2}}_{k}(s^{\mathbbm{2}}_{k})^{\top}C^{\top}_{k}}{(s^{\mathbbm{2}}_{k})^{\top}C_{k}s^{\mathbbm{2}}_{k}}+\frac{v^{\mathbbm{2}}_{k}(v^{\mathbbm{2}}_{k})^{\top}}{(v^{\mathbbm{2}}_{k})^{\top}s^{\mathbbm{2}}_{k}}.
\end{equation}
Here $\bar{\epsilon}$ and $\gamma$ are given positive parameters, 
$s^{\mathbbm{1}}_k=x_{k+1}-x_k$, $s^{\mathbbm{2}}_{k}=y_{k+1}-y_{k}$, \begin{align*}
v^{\mathbbm{1}}_{k}&=(\nabla_xF^{\mathbbm{1}}(z_{k+1})-\nabla_xF^{\mathbbm{1}}(z_{k}))^{\top}F^{\mathbbm{1}}(z_{k+1})\Vert F^{\mathbbm{1}}(z_{k+1})\Vert/\Vert F^{\mathbbm{1}}(z_{k})\Vert, \\ v^{\mathbbm{2}}_{k}&=(\nabla_yF^{\mathbbm{2}}(z_{k+1})-\nabla_yF^{\mathbbm{2}}(z_{k}))^{\top}F^{\mathbbm{2}}(z_{k+1})\Vert F^{\mathbbm{2}}(z_{k+1})\Vert/\Vert F^{\mathbbm{2}}(z_{k})\Vert.
\end{align*}

Notice that the approximation form \eqref{GN_BFGS} is proposed in \cite{zhou2010global}. However, the matrix $A_k$ in \cite{zhou2010global} is defined by using $F(z)$ and $\nabla F(z)$ at $z_{k+1}, z_k$. In this paper, we use $F^{\mathbbm{1}}(z)$, $F^{\mathbbm{2}}(z)$, $\nabla F^{\mathbbm{1}}(z)$ and $\nabla F^{\mathbbm{2}}(z)$ at $z_{k+1}, z_k$ to define a two-block diagonal matrix $A_{k}$ in \eqref{GN_BFGS}, based on the structure of VI in (\ref{boxF2}).

In \cite{zhou2010global},  a back tracking line search is
used to obtain a stationary point of the least squares problem. In this paper, we use a subspace trust-region method to solve problem \eqref{TRM_min_smooth} with global convergence guarantees. Comparing with the quasi-Newton method with back tracking line search in \cite{zhou2010global}, the QNSTR algorithm solves a strongly convex quadratic subproblem in a low dimension at  each step, which is efficient to solve large-scale min-max optimization problems with real data. See Section \ref{Sec4} for more details.

In what follows, for simplification, we use $J_{k}$ and $F_{k}$ to denote $J(z_{k})$, $F(z_{k})$, respectively.

Let $g_{k}=\nabla r(z_{k})$. Choose $\{d_k^1, \cdots, d_k^{L-1}\}$ such that
$V_{k}:=\left[\begin{matrix}
-g_{k}&
d_{k}^{1}&
\cdots&
d_{k}^{L-1}
\end{matrix}\right]\in\mathbb{R}^{(n+m)\times L}$ has $L$ linearly independent column  vectors. Let
$$c_{k}:=V_{k}^{\top}g_{k}, \quad G_{k}:=V_{k}^{\top}V_{k},\quad Q_{k}:=V_{k}^{\top}H_{k}V_{k}.$$
Then, to obtain the stepsize $\alpha_{k}$ at the iteration point $z_k$, we solve the following strongly convex quadratic program in an $L$-dimensional space:
\begin{equation}
\label{gs20}
\begin{array}{cl}
\alpha_{k}=\argmin\limits_{\alpha\in\mathbb{R}^{L}} & m_{k}(\alpha):=r(z_k)+ c_{k}^{\top}\alpha+\frac{1}{2}\alpha^{\top}Q_{k}\alpha\\
~~~~~~~~\mathrm{s.t.} & \Vert V_k\alpha\Vert \leq \Delta_k,
\end{array}
\end{equation}
where $\Delta_{k}>0$ is the trust-region radius.

A key question for solving problem \eqref{gs20} is how to compute $Q_{k}$ efficiently when $H_{k}$ is huge. In fact, $Q_k$ can be calculated efficiently without computing and storing the full information $H_k$. From \eqref{GN_BFGS}, we can write $Q_{k}$ as
\begin{equation}\label{gs11}
    Q_{k}=V^{\top}_{k}J_{k}^{\top}J_{k}V_{k} + V^{\top}_{k}A_{k}V_{k}.
\end{equation}
For the term $V_{k}^{\top}J_{k}^{\top}J_{k}V_{k}$ in \eqref{gs11}, we compute $J_{k}V_{k}$ in a columnwise way: For a sufficiently small $\epsilon>0$,
$$J_{k}g_{k}\approx \frac{F(z_{k}+\epsilon g_{k})-F(z_{k})}{\epsilon},~~ J_{k}d_{k}^{i}\approx \frac{F(z_{k}+\epsilon d^{i}_{k})-F(z_{k})}{\epsilon},~ i=1,\cdots,L-1.$$
On the other hand, $A_{k}V_{k}$ in term $V_{k}^{\top}A_{k}V_{k}$ can also be computed columnwisely by a series of vector-vector products.

We give the QNSTR algorithm in Algorithm \ref{Alg4}.

\begin{algorithm}[H]
	\caption{The QNSTR Algorithm }\label{Alg4}
	{\bf Input:} $\bar{\Delta}>0$, $\Delta_{0}\in(0,\bar{\Delta})$, $\beta_{1}<1<\beta_{2}$, $0\leq \eta < \zeta_{1} < \zeta_{2} \leq 1$, tolerance parameter $\delta>0$, $\epsilon>0$, $z_0\in \mathbb{R}^{n+m}.$
\begin{algorithmic}[1]
	
		\State If $\Vert g_k\Vert\leq\delta$ or $\Vert F(z_{k})\Vert\leq\epsilon$, terminate. Otherwise solve  \eqref{gs20} for $\alpha_{k}$.
        \State Compute the reduction ratio at iterate $k$:
          \begin{equation}
          \label{rho}
          \rho_{k}=\frac{r(z_{k})-r(z_{k}+V_{k}\alpha_{k})}{m_{k}(0)-m_{k}(\alpha_{k})}
          \end{equation}
        \If{$\rho_{k}<\zeta_{1}$}
        \State $\Delta_{k+1}=\beta_{1}\Delta_{k}$
        \Else
        \If{$\rho_{k}\ge\zeta_{2}$ and $\|V_k\alpha_k\|=\Delta_k$,}
        \State $\Delta_{k+1}=\min\{\beta_{2}\Delta_{k},\bar{\Delta}\}$
        \Else
        \State $\Delta_{k+1}=\Delta_{k}$
        \EndIf
        \EndIf
        \If{$\rho_{k} > \eta$}
        \State $z_{k+1}=z_{k}+V_k\alpha_k$
        \Else
        \State $z_{k+1}=z_{k}$
        \EndIf
	\end{algorithmic}
\end{algorithm}

Trust-region algorithms are a class of popular numerical methods for optimization problems \cite{conn2000trust,Yuan2015}.
 Our QNSTR algorithm uses the special structure of the VI in (\ref{boxF1}) and (\ref{boxF2}) to construct the subproblem \eqref{gs20}.  The global convergence of the QNSTR algorithm is given in the following theorem.

\begin{theorem}\label{th4-1}
Suppose that $X$ and $Y$ are nonempty and bounded boxes and $\hat{f}_N$ is twice continuously differentiable.  Let $\{z_{k}\}_{k=0}^{\infty}$ be generated by Algorithm \ref{Alg4}. Then there exists an $M>0$ such that $\norm{\nabla r(z) - \nabla r(z')} \leq M \norm{z - z'}$ for any $z,z'\in S(R_0)$ and $\norm{H_k} \leq M$ for $k\in \mathbb{N}$. Moreover, we have
$\lim_{k\rightarrow\infty}\|g_{k}\|=0.$
\end{theorem}

The proof is given in Appendix B.\\

To end this section, we give some remarks about Theorem \ref{th4-1}.

\begin{remark}
From Corollary 2.2.5 in  \cite{facchinei2003finite},  the continuity of $F$ and boundness of $X$ and $Y$ imply that the solution set ${\cal Z}^*$ of $F(z)=0$ is nonempty and bounded. Hence
the set of minimizers of the least squares problem $\min r(z)$ with the optimal value zero is coincident with the solution set of $F(z)=0$.  If all matrices in $\partial_C F_N(z)$ for $z\in {\cal Z}^*$ are nonsingular, we know from the compactness of $\partial_C F_N(z)$ and (ii) of Lemma \ref{Lem4} that there is a $\mu_0 >0$ such that for any $\mu\in (0, \mu_0)$, $J(z)$ is nonsingular and $\sup_{\mu\in (0,\mu_0)}\norm{J(z)^{-1}}\leq C$ for some $C>0$.
Thus,
\begin{align*}
\norm{F(z)} = \norm{(J(z)^\top)^{-1} \nabla r(z)} \leq \norm{(J(z)^\top)^{-1}}\norm{\nabla r(z)}\leq C\norm{\nabla r(z)}.
\end{align*}
We have from (i) of Lemma \ref{Lem4}, i.e., $\norm{ F_N(z) - F(z) } \leq \kappa \mu$ that
$$\norm{F_N(z)} \leq C \norm{\nabla r(z)}+ \kappa \mu.$$
Thus, for any $\epsilon>0$, we can properly select parameters $\delta$ and $\mu$ such that $\norm{\nabla r(z)}\le \delta$ and $C \delta+ \kappa \mu \leq \epsilon$.

According to Theorem \ref{th4-1}, we can find a point $z_k$ such that
$$\norm{g_k} = \norm{\nabla r(z_k)} \leq \delta,$$
with  an given stopping criterion parameter $\delta$. The numerical experiments in the next section show that the QNSTR algorithm can result in an $\epsilon$-first-order stationary point of problem \eqref{SAA_MINIMAX}.
\end{remark}

\section{Numerical experiments}\label{Sec4}

In this section, we report some numerical results via the QNSTR algorithm for finding an $\epsilon$-first-order stationary point of problem  \eqref{SAA_MINIMAX}. Also, we compare the QNSTR algorithm with some state-of-the-art algorithms for minimax problems. All of the numerical experiments in this paper are implemented on TensorFlow 1.13.1, Python 3.6.9 and Cuda 10.0 on a server with 1 Tesla P100-PCIE GPU with 16 GB memory at 1.3285 GHz and an operating system of 64 bits in the University Research Facility in Big Data Analytics (UBDA) of The Hong Kong Polytechnic University. (UBDA website: https://www.polyu.edu.hk/ubda/.)

We test our algorithm with two practical problems. One is a GAN based image generation problem for MNIST hand-writing data. The other one is a mix model for image segmentation on Digital Retinal Images for Vessel Extraction (DRIVE) data. In the first experiment, we use notation QNSTR(L) to denote that the dimension of the subspace spanned by the columns of $V_k$ is $L$ in the QNSTR algorithm and test the efficiency of QNSTR(L) under different choices of $L$ and directions $\{d^{i}_{k}\}_{i=1}^{L-1}$. In the second experiment, we apply the QNSTR algorithm to a medical image segmentation problem.

To ensure that $H_N$ is continuously differentiable, we choose Gaussian error Linear Units (GELU) \cite{hendrycks2016gaussian}
\begin{equation*}
\sigma(x) = x\int_{-\infty}^{x}\frac{e^\frac{-t^{2}}{2\Sigma^{2}}}{\sqrt{2\pi}\Sigma}dt
\end{equation*}
with $\Sigma=10^{-4}$ as the activation function in each hidden layer in $D$ and $G$, and Sigmoid
\begin{equation*}
\sigma(x)=\frac{e^{-x}}{1+e^{-x}}
\end{equation*}
as the activation function of the output layers in $D$ and $G$.

We test different choices of the subspace spanned by the columns of $V_k$. Specifically, denote
$$V^z_k= [ -g_k,   (z_{k}-z_{k-1}), \cdots,(z_{k-L+2}-z_{k-L+1})],$$
$$V_k^F=[ -g_k,  F(z_{k}), \cdots,  F(z_{k-L+2})],$$
$$ V_k^g= -[g_k, g_{k-1},  \cdots,  g_{k-L+1}]$$
for $L\ge 2.$
Among all the experiments in the sequel, the initial point $z_{0}$ is the result for running 10000 steps of the alternating Adam with step size $0.0005$. The parameters in Algorithm \ref{Alg4} are set as $\bar{\Delta}=100$, $\Delta_0=1$, $\beta_1=0.5$, $\beta_2=2$, $\eta=0.01$, $\zeta_{1}=0.02$, $\zeta_{2}=0.05$. The parameters $\bar{\epsilon}$ and $\gamma$ in \eqref{GN_SNFGS1} and \eqref{GN_SNFGS2} are chosen as $\bar{\epsilon}=10^{-4}$ and $\gamma=10^3.$

\subsection{Numerical performance on MNIST data}

In this subsection, we report some preliminary numerical results of the QNSTR algorithm for solving the following SAA counterpart
\begin{equation}
\label{SAA-GAN}
\begin{aligned}
\min_{x\in X}\max_{y\in Y}  \frac{1}{N}\sum^{N}_{i=1}\Big(\log (D(y,\xi^i_1)) +  \log (1- D(y,G(x,\xi^i_2))) \Big)
\end{aligned}
\end{equation}
of problem \eqref{GAN} by using MNIST handwritting data. We consider a two-layer GAN, where
$$\xi^i_1\in \mathbb{R}^{784}, \, \xi^i_2\in \mathbb{R}^{100}, $$
$$  W^1_G\in \mathbb{R}^{N_G^1\times 100}, \,W^2_G\in \mathbb{R}^{784\times N_G^1},\, W^1_D\in \mathbb{R}^{N^1_D\times 784},\,W^2_D\in \mathbb{R}^{1\times N^1_D}$$
with different choices of dimensions $N^1_G$ and $N^1_D$ for hidden outputs. Here $\{\xi^i_2\}$ are generated by an uniform distribution ${\cal U}$$(-1, 1)^{100}$.  All initial weight matrices $W^{i}_{G}$ and $W^{i}_{D}$ for $i=1,2$ are randomly generated by using the Gaussian distribution with mean $0$ and standard deviation $0.1$, and all initial bias vectors  $b^{i}_{G}$ and $b^{i}_{D}$ for $i=1,2$ are set to be zero. $X$ and $Y$ are set as $[-1,1]^{n}$ and $[-1,1]^{m}$, respectively.

\subsubsection{Numerical results for SAA problems}

We first study the performance of SAA problems under different sample sizes on a GAN model with a two-layer generator with $N_{G}^{1}=64$ and a two-layer discriminator with $N_{D}^{1}=64$, respectively. We set $\hat{N}=10000$ as the benchmark to approximate the original problem \eqref{GAN} and let $N=100, 500, 1000, 2000, 5000$. For each $N$, we solve $F_N(z)=0$,  50 times with different samples by using the QNSTR algorithm. We stop the iteration either
\begin{equation}
    \label{SAA_condition}
    \Vert F_{N}(z_k)\Vert\leq 10^{-5}
\end{equation}
or the number of iterations exceeds $5000$.

In these experiments, we set $\mu=10^{-8}$. We use $z^{*}_{N}$ to denote the first point that satisfies \eqref{SAA_condition} in the iteration for $N=500, 1000, 2000, 5000$, and we measure its optimality by the residual
$$\mathrm{res}_{N}:=\Vert z^{*}_{N}-\mathrm{mid}\left(l,u,z^{*}_{N}-H_{N}(z^{*}_{N}\right)\Vert.$$

Figure \ref{fig_SAA} shows the convergence of $\mathrm{res}_{N}$ to zero as $N$ grows. Table \ref{tab_SAA} presents the average of mean, standard deviation (std) and the width of 95\% CI of $\mathrm{res}_{N}$. It shows that all values decrease as the sample size $N$ increases. Both Figure \ref{fig_SAA} and Table \ref{tab_SAA} validate the convergence results in Section \ref{Sec2}.
%

\begin{figure}[htb]
	\centering
		\includegraphics[width=1\textwidth]{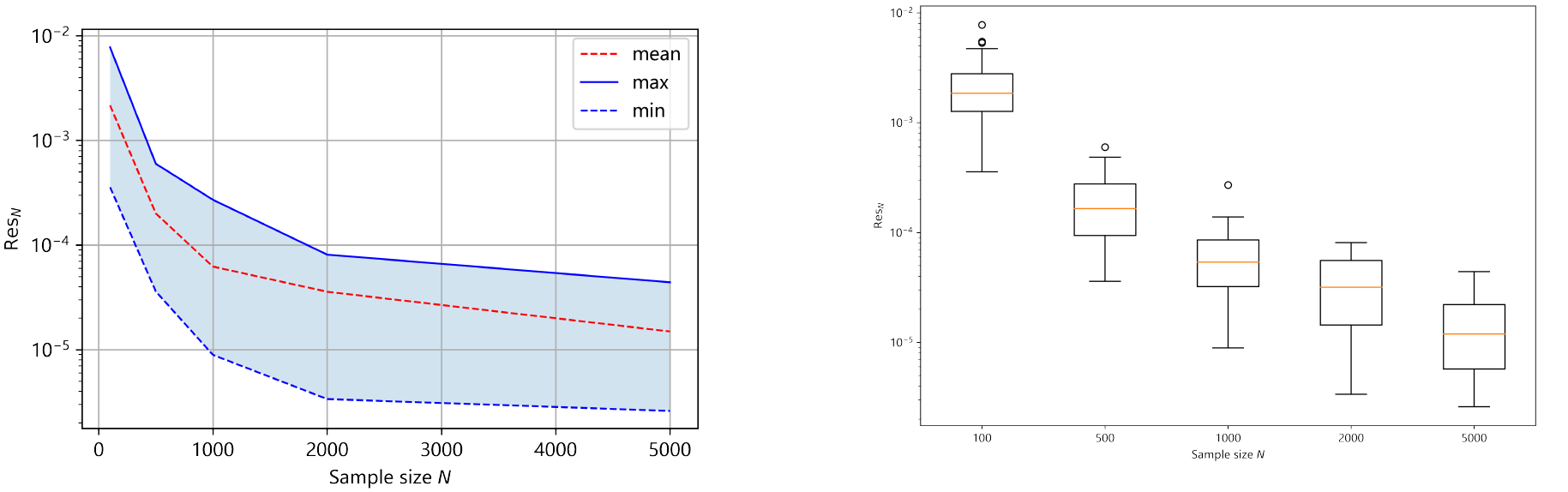}
		\caption{The convergence of $\mathrm{res}_{N}$ as $N$ grows (left: The range of $\mathrm{res}_{N}$ with different $N$, right: The boxplot of $\mathrm{res}_{N}$ with different $N$)}
	\label{fig_SAA}
\end{figure}

\begin{table}[htb]
	\centering
  \begin{footnotesize}
	\begin{tabular}{c|c|c|c|c}
        \hline
		  $N$ &   $500$ &  $1000$ & $2000$ & $5000$ \\
		\hline
		Mean &  $2.01\times10^{-4}$ & $6.22\times 10^{-5}$ & $3.58\times10^{-5}$ & $1.49\times10^{-5}$  \\
        \hline
		std &  $1.65\times10^{-4}$ & $4.95\times10^{-5}$ & $2.93\times10^{-5}$ & $1.18\times 10^{-5}$ \\
        \hline
		95\% CI & $[1.65,2.36]_{\times10^{-4}}$ & $[4.95,7.49]_{\times10^{-5}}$ & $[2.93,4.23]_{\times10^{-5}}$ & $[1.18,1.79]_{\times10^{-5}}$ \\
		\hline
	\end{tabular}
 \end{footnotesize}
	\caption{Means, variances and 95\% CIs of $\mathrm{res}_{N}$ with different $N$}
	\label{tab_SAA}
\end{table}

\subsubsection{Numerical results for smoothing approximation}
In this subsection, for a fixed sample size $N=2000$, we study how the smoothing parameter $\mu$ affects the residual $\Vert F_{N,\mu}(z)\Vert$. All numerical results in this part are based on a GAN model which is constituted of a two-layer generator with $N_{G}^{1}=64$ and a two-layer discriminator with $N_{D}^{1}=64$. Specifically, for $\mu=10^{-t}, t=1, 2, 4, 5, 6, 8$, we generate $50$ test problems, respectively. For each $\mu$, we solve problem $F_{N,\mu}(z)=0$ by the QNSTR algorithm. We stop the iteration either
condition $\|F_{N,\mu}(z_k)\|\le 10^{-5}$ holds
or the number of iterations exceeds $5000$.

We use $z^{*}_{\mu}$ to denote the first point that satisfies \eqref{SAA_condition} in the iteration, and we measure the residual of $z_{\mu}^{*}$ by
$$\mathrm{res}_{\mu}:=\Vert z^{*}_{\mu}-\mathrm{mid}\left(l,u,z^{*}_{\mu}-H_{N}(z^{*}_{\mu}\right)\Vert.$$

The numerical results are presented in Figure \ref{fig_smooth}, which shows that  $\mathrm{res}_{\mu}$ decreases as smoothing parameter $\mu$ decreases. In fact, the residual  $\mathrm{res}_{\mu}$ becomes stable when $\mu\leq 10^{-5}$. Note that it is not difficult to obtain $\kappa=\frac{\sqrt{n+m}}{8}$ in \eqref{kappa}. Also, when $N_{D}^{1}=N_{G}^{1}=64$, we have $n+m=107729$. If $\Vert F_{N,\mu}(z)\Vert \leq 10^{-5}$ and $\mu=10^{-8}$, then we have $\|F_{N}(z)\|\le \kappa \mu + \epsilon\le
\frac{\sqrt{107729}}{8}\times 10^{-8} +10^{-5}\approx1.041\times10^{-5}$. This is consistent with the theoretical results in Section \ref{Sec3}.

%

\begin{figure}[htb]
	\centering
		\includegraphics[width=1\textwidth]{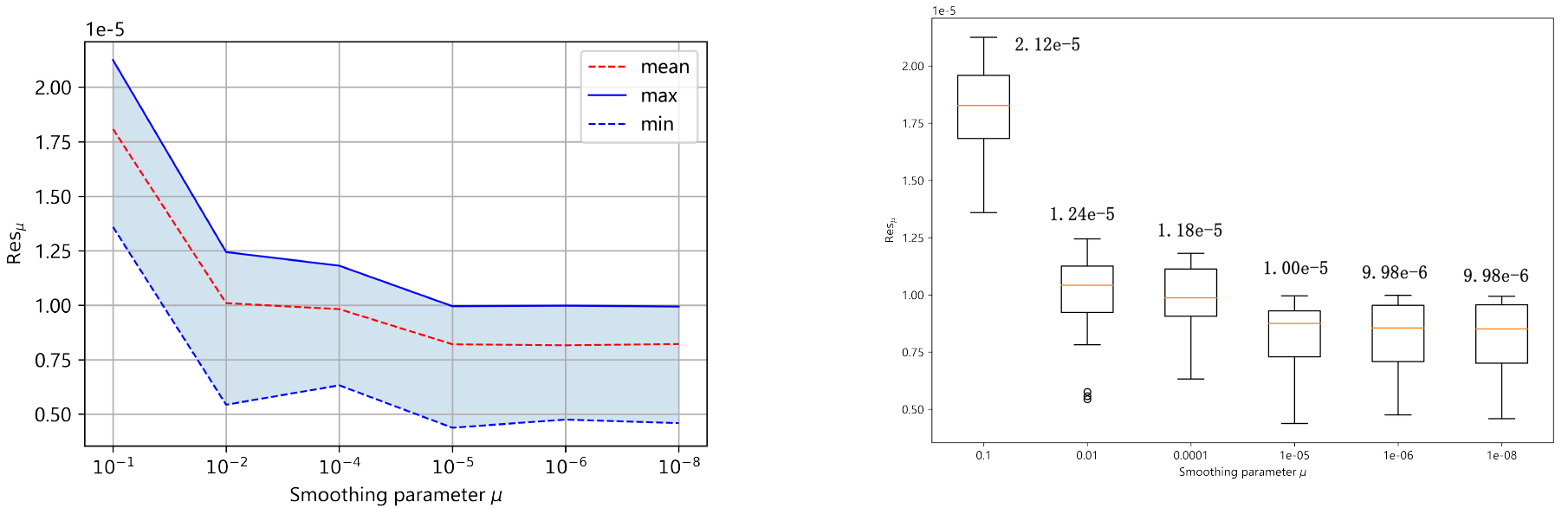}
		\caption{The convergence of $\mathrm{res}_{\mu}$ as $\mu$ decreases (left: The range of $\mathrm{res}_{\mu}$ with different $\mu$; right: The boxplot of $\mathrm{res}_{\mu}$ with different $\mu$)}
	\label{fig_smooth}
\end{figure}

\subsubsection{Comparison experiments}

In this subsection, we report some numerical results to compare the QNSTR algorithm with some commonly-used methods. To this end, we set $N=2000$ and $\mu=10^{-8}$. We use $\Vert F_{N,\mu}(z_{k})\Vert$ to measure performance of these algorithms and apply Frechet Inception Distance (FID) score to measure the quality of image generated by the generator $G$ trained by different algorithms. We terminate all these algorithms when one of the three cases holds:  $\Vert F_{N,\mu}(z_{k})\Vert \leq 10^{-5}$, $\Vert \nabla F_{N,\mu}(z_{k})^\top F_{N,\mu}(z_{k})\Vert \leq 10^{-8}$, the number of iterations exceeds $5000$.

We present the numerical results in Figures \ref{fig3}, \ref{fig4}, \ref{fig5}. It is not difficult to observe from these figures that the QNSTR algorithm outperforms.

%

\begin{figure}[htb]
	\centering
		\includegraphics[width=1\textwidth]{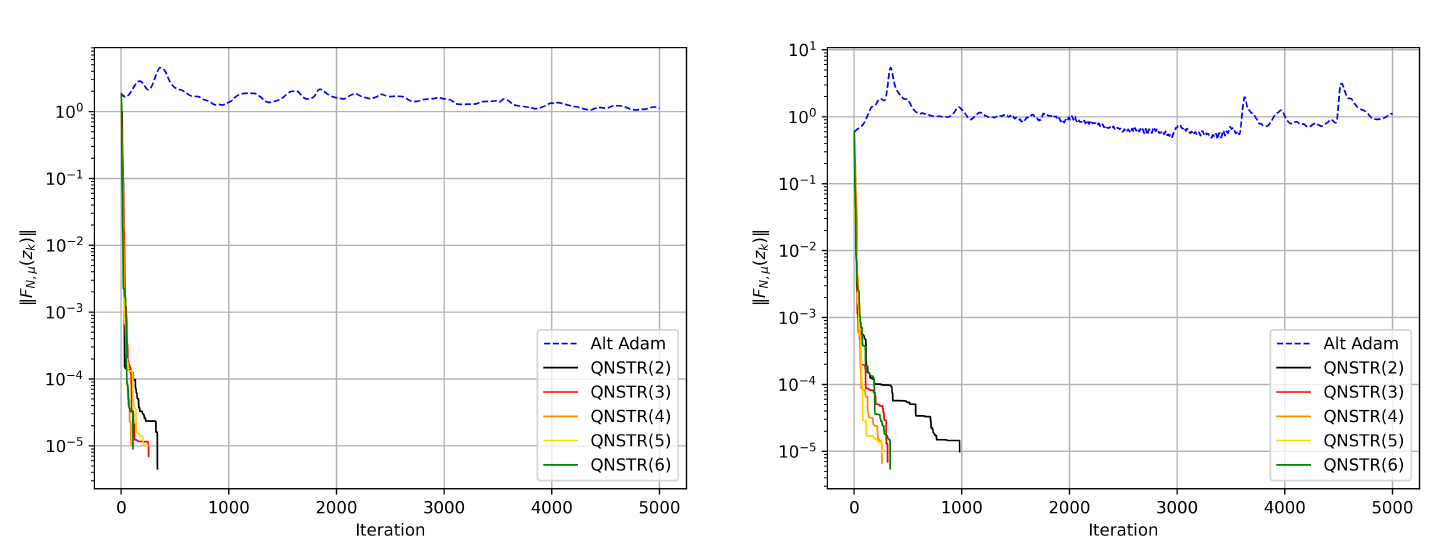}
		\caption{The residual $\Vert F_{N,\mu}(z_k)\Vert$ with $V^z_{k}$ (left: $N^{1}_{G}=N^{1}_{D}=64$; right:   $N^{1}_{G}=N^{1}_{D}=128$)}
	\label{fig3}
\end{figure}

\begin{figure}[htb]
	\centering
		\includegraphics[width=1\textwidth]{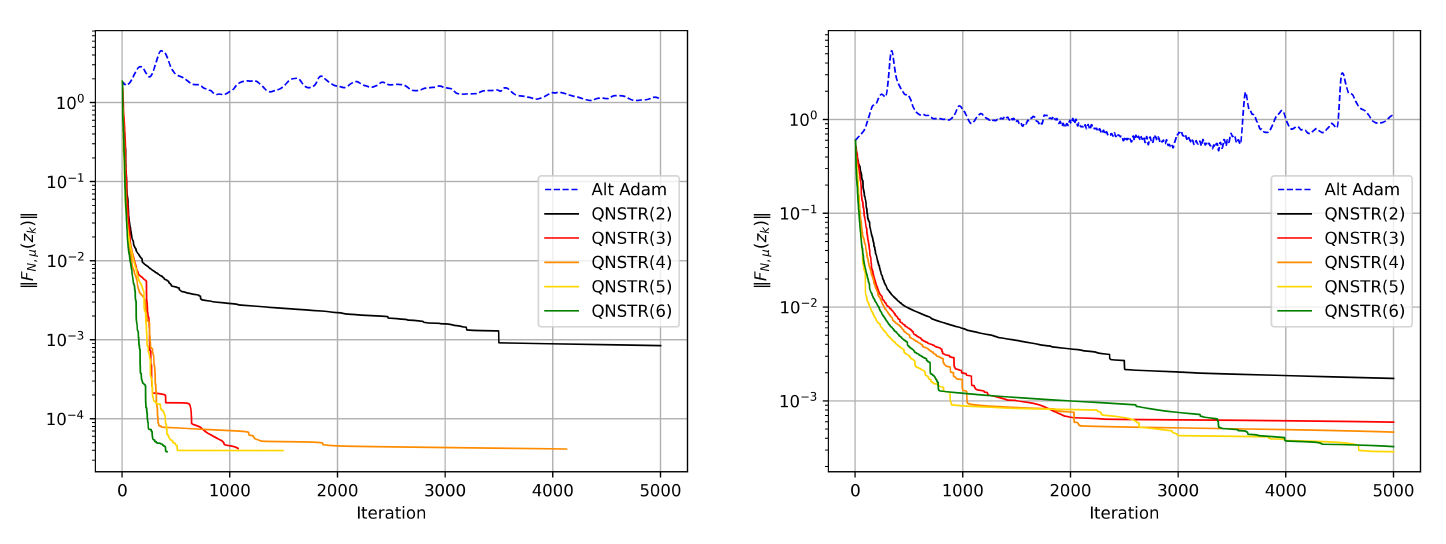}
		\caption{The residual $\Vert F_{N,\mu}(z_k)\Vert$ with $V^F_{k}$ (left: $N^{1}_{G}=N^{1}_{D}=64$; right:   $N^{1}_{G}=N^{1}_{D}=128$)}
	\label{fig4}
\end{figure}

%

\begin{figure}[htb]
	\centering
		\includegraphics[width=1\textwidth]{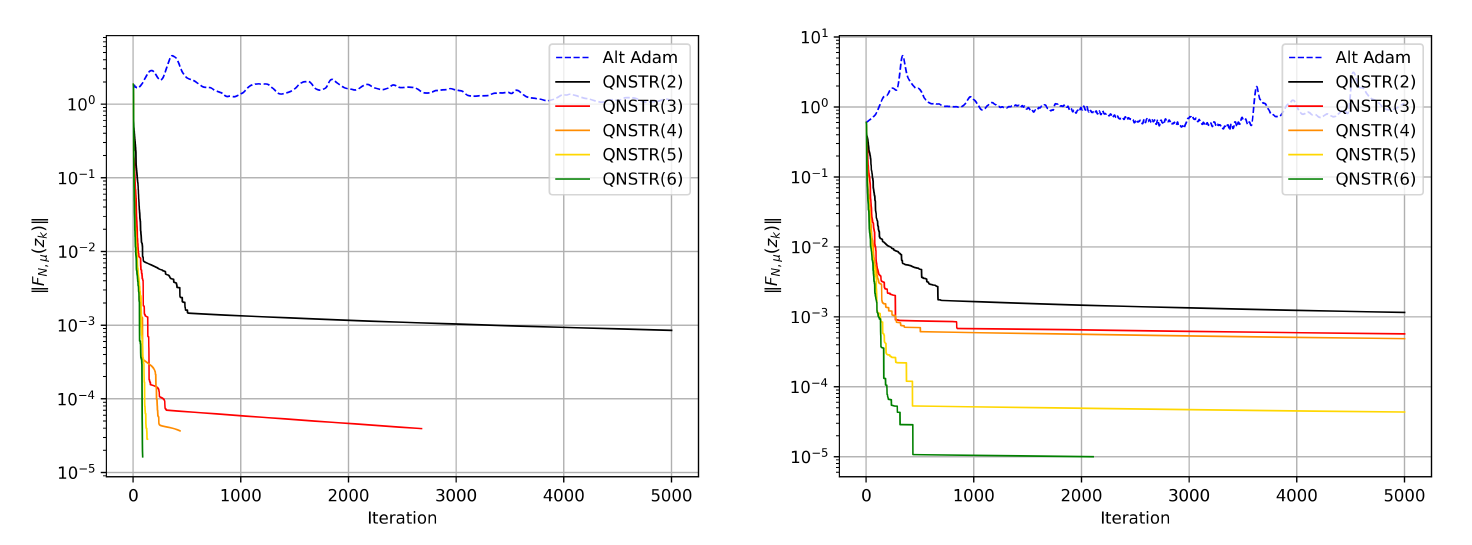}
		\caption{The residual $\Vert F_{N,\mu}(z_k)\Vert$ with $V^g_{k}$ (left: $N^{1}_{G}=N^{1}_{D}=64$; right:   $N^{1}_{G}=N^{1}_{D}=128$)}
	\label{fig5}
\end{figure}

We also compare the QNSTR algorithm with some commonly-used algorithms in training GANs including simultaneous and alternate gradient descent-ascent (GDA) \cite{zhang2022near,daskalakis2017training}, simultaneous and alternate optimistic gradient descent-ascent (OGDA) \cite{zhang2022near,daskalakis2017training}, $\gamma$-alternate Adam, projected point algorithm (PPA) \cite{liu2021first,diakonikolas2021efficient}. The initial point $z_{0}$ of all these methods are given by alternating Adam with stepsize $0.0005$ and $10000$ iterations. We use the grid search for the selection of hyper-parameters in these methods. For simultaneous and alternate GDA and simultaneous and alternate OGDA, the stepsize is $\alpha= 0.5, 0.05, 0.005, 0.001, 0.0005$. For $\gamma$-alternate Adam, the ratio is $\gamma= 1, 2, 3, 5, 10$. For PPA, the proximal parameter is $\overline{L}= 10, 100, 500, 1000, 5000$ and stepsize is $\frac{0.1}{2\overline{L}}$. Besides, the stopping criteria of the $k$-th inner loop is $\frac{0.01}{k^2}$ or the number of iterations exceeds $100$.
The comparison results between the QNSTR algorithm and the $\gamma$-alternate Adam, the QNSTR algorithm and PPA are given in Figures \ref{fig6} and \ref{fig7}, respectively. According to the results, we can see the $\gamma$-alternate Adam also shows a outstanding convergence tendency under a suitable hyper-parameter and the QNSTR algorithm can even better than the $\gamma$-alternate Adam if a suitable searching space $V_{k}$ is selected. The PPA performs poorly in these comparisons.

%

%

\begin{figure}[htb]
	\centering
		\includegraphics[width=1\textwidth]{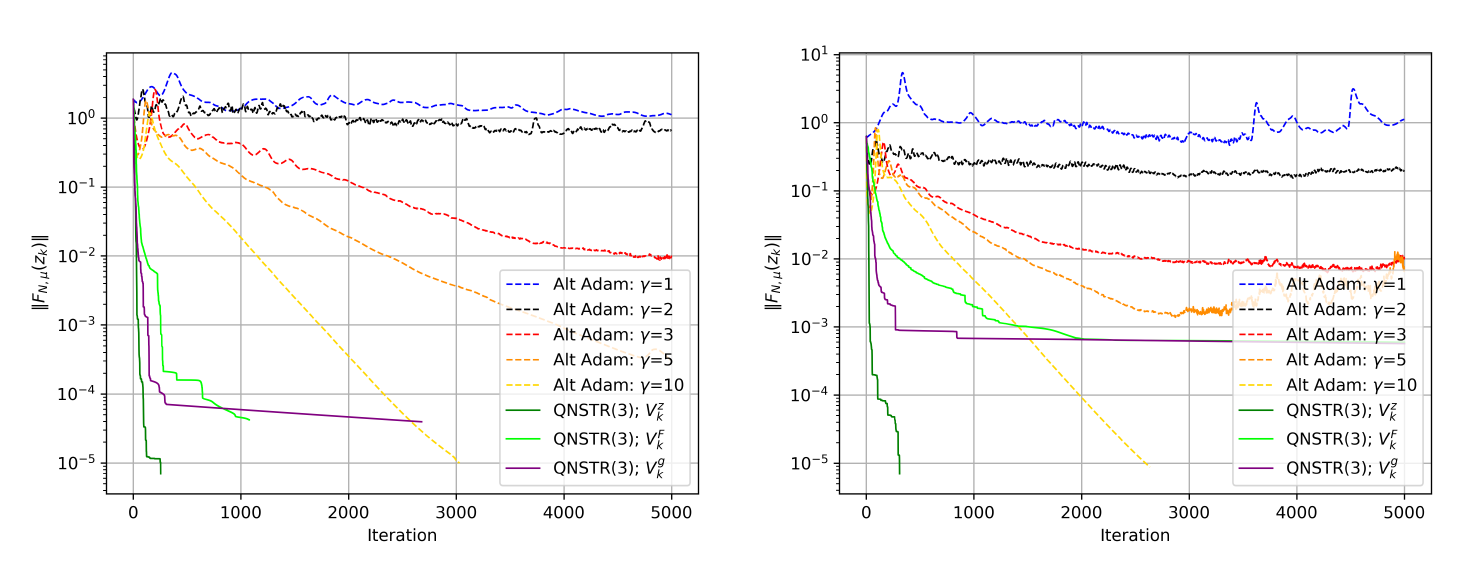}
		\caption{Comparison results between the QNSTR algorithm for $V^{z}_{k}$, $V_{k}^{F}$, $V_{k}^{g}$ and $\gamma$-alternate Adam (left: $N^{1}_{G}=N^{1}_{D}=64$; right   $N^{1}_{G}=N^{1}_{D}=128$) }
	\label{fig6}
\end{figure}

\begin{figure}[htb]
	\centering
		\includegraphics[width=1\textwidth]{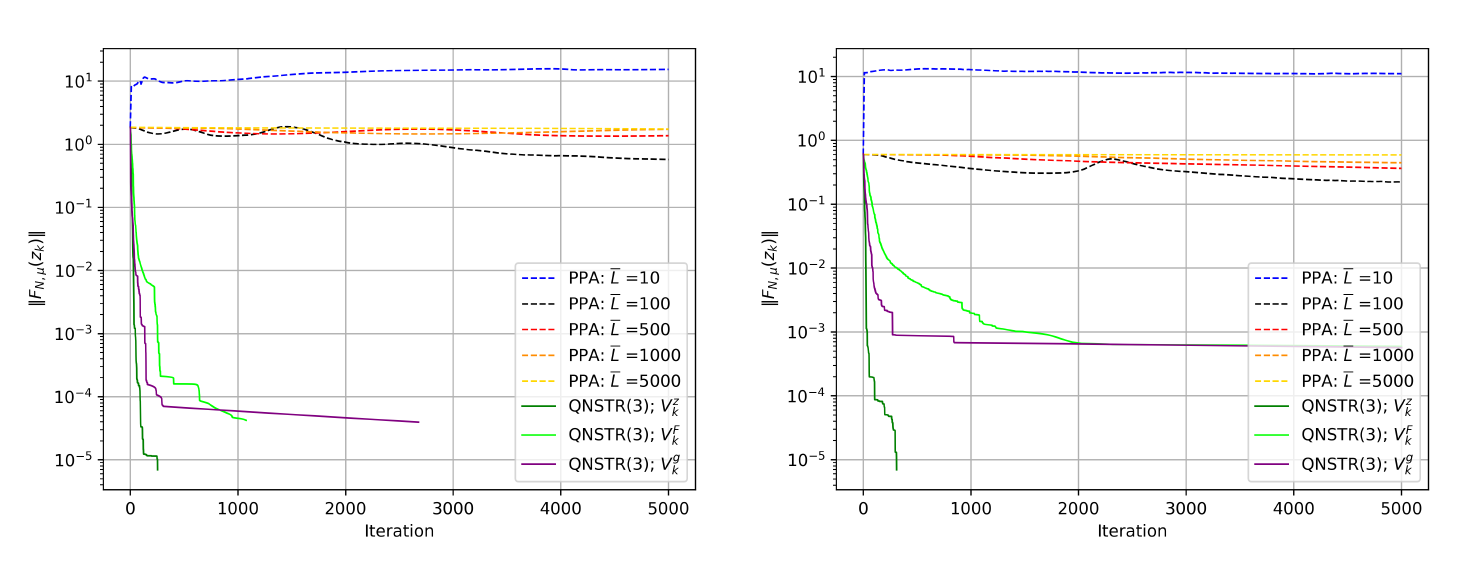}
		\caption{Comparison results between the QNSTR algorithm for $V^{z}_{k}$, $V_{k}^{F}$, $V_{k}^{g}$ and PPA (left: $N^{1}_{G}=N^{1}_{D}=64$; right   $N^{1}_{G}=N^{1}_{D}=128$) }
	\label{fig7}
\end{figure}

The comparison results between the QNSTR algorithm and simultaneous and alternate GDA, simultaneous and alternate OGDA are given in Figure \ref{fig8}. To show these comparison results more clearly, we only give the results of each method with the optimal stepsize $\alpha$ in the search range.

%
%

\begin{figure}[htb]
\centering
\includegraphics[width=1\textwidth]{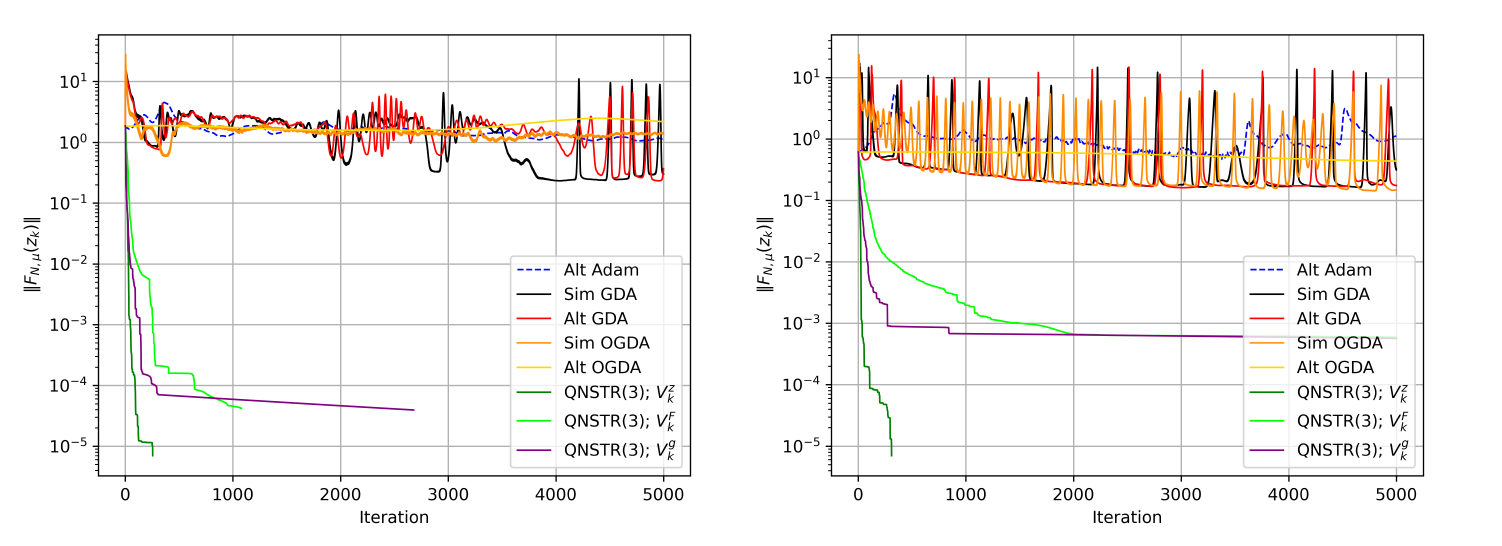}
\caption{Comparison results between the QNSTR algorithm for $V^{z}_{k}$, $V_{k}^{F}$, $V_{k}^{g}$ and simultaneous GDA, alternate GDA, simultaneous OGDA, alternate OGDA (left: $N^{1}_{G}=N^{1}_{D}=64$; right:  $N^{1}_{G}=N^{1}_{D}=128$)}
\label{fig8}
\end{figure}

We can observe from these figures that the QNSTR algorithm outperforms, which validates that the QNSTR algorithm is more efficient in finding an $\epsilon$-first-order stationary point of problem \eqref{SAA-GAN}.

We also record the final FID score of each algorithm's output. All results are given in Tables \ref{FID1} and \ref{FID2}. They show the generator of GANs trained by the QNSTR algorithm can generate high quality images.

\begin{table}[htb]

    \centering
    \begin{tabular}{c|ccccc}

        \multicolumn{6}{c}{QNSTR}  \\
          $L$   & $2$ & $3$ & $4$ & $5$ & $6$ \\
        \hline
         $V_{k}^{z}$ & 30.33 & 30.38 & 38.86 & 37.64 & 38.57  \\
         $V_{k}^{F}$ & 30.98 & 31.86 & 32.26 & 32.47 & 32.00  \\
         $V_{k}^{g}$ & 32.09 & 33.17 & 33.21 & 33.46 & 33.06  \\
         \hline
         \multicolumn{6}{c}{} \\
        \multicolumn{6}{c}{PPA}  \\
        $\overline{L}$ & $10$ & $100$ & $500$ & $1000$ & $5000$ \\
         \hline
       & 32.77 & 31.93 & 36.15 & 37.26 & 37.73 \\
         \multicolumn{6}{c}{} \\
        \multicolumn{6}{c}{ $\gamma$-alt adam }  \\
        $\gamma$ & $1$ & $2$ & $3$ & $5$ & $10$ \\
        \hline
         & 28.30 & 32.02 & 32.37 & 32.31 & 32.22  \\
         \multicolumn{6}{c}{} \\
                 \multicolumn{6}{c}{sGDA, aGDA, sOGDA, aOGDA}  \\
        $\alpha$ & $0.5$ & $0.05$ & $0.01$ & $0.005$ & $0.0005$ \\
        \hline
      sGDA   & 34.11 & 30.87 & 31.56 & 32.97 & 34.57  \\
      aGDA    & 32.21 & 30.86 & 31.53 & 31.96 & 33.87 \\
      aOGDA     & 30.73 & 30.14 & 31.47 & 32.02 & 34.06  \\
      aOGDA     & 30.97 & 31.91 & 31.87 & 32.46 & 33.82  \\
         \hline
    \end{tabular}
    \caption{FID scores of different algorithms with $N_D^1= N_G^1=64$}
    \label{FID1}
\end{table}

\begin{table}[htb]

    \centering
    \begin{tabular}{c|ccccc}

        \multicolumn{6}{c}{QNSTR}  \\
          $ L$  & $2$ & $3$ & $4$ & $5$ & $6$ \\
        \hline
         $V_{k}^{z}$ & 30.33 & 30.08 & 37.48 & 33.22 & 34.15  \\
         $V_{k}^{F}$ & 33.20 & 31.33 & 30.27 & 32.43 & 31.83  \\
         $V_{k}^{g}$ & 28.30 & 27.73 & 37.64 & 27.85 & 26.82  \\
         \hline
         \multicolumn{6}{c}{} \\
        \multicolumn{6}{c}{PPA }  \\
        $\overline{L}$ & $10$ & $ 100$ & $ 500$ & $ 1000$ & $ 5000$ \\
         \hline
        & 29.85 & 27.32 & 27.06 & 38.76 & 30.48 \\
         \multicolumn{6}{c}{} \\
        \multicolumn{6}{c}{$\gamma$-alt adam}  \\
        $\gamma$ & $1$ & $ 2$ & $ 3$ & $ 5$ & $ 10$ \\
        \hline
          & 26.13 & 27.73 & 29.64 & 30.98 & 30.70  \\
         \multicolumn{6}{c}{} \\
                 \multicolumn{6}{c}{sGDA, aGDA, sOGDA, aOGDA}  \\
        $\alpha$ & $0.5$ & $ 0.05$ & $ 0.01$ & $ 0.005$ & $ 0.0005$ \\
        \hline
         sGDA & 31.43 & 30.18 & 30.85 & 32.64 & 34.57  \\
         aGDA & 31.21 & 29.98 & 30.46 & 32.33 & 34.87 \\
         sOGDA & 30.91 & 28.91 & 30.24 & 31.85 & 34.06  \\
         aOGDA & 32.97 & 30.17 & 29.87 & 30.46 & 33.52  \\
         \hline
    \end{tabular}
    \caption{FID scores of different algorithms with $N_D^1= N_G^1=128$}
    \label{FID2}
\end{table}

\subsection{DRIVE data}\label{subsec4}

Image segmentation is an important component in many visual understanding systems, which is the process of partitioning a digital image into multiple image segments \cite{szeliski2022computer}. Image segmentation plays a central role in a broad range of applications \cite{forsyth2002computer}, including medical image analysis, autonomous vehicles (e.g., navigable surface and pedestrian detection), video surveillance and augmented reality. One of the well-known paradigm for image segmentation is based on some kinds of manual designed loss functions. However, they usually lead to the blurry segmentation boundary \cite{isola2017image}.

In 2016, Phillip et. al introduced a generative adversarial network framework into their objective function to implement an image-to-image translation problem \cite{isola2017image}, and found the blurry output given CNN under $l_{1}$ norm can be reduced. At the same year, Pauline et. al introduced a mix objective function combining by GAN and cross-entropy loss on semantic image segmentation problem \cite{luc2016semantic}, also implemented a better performance. The similar idea of  mix GAN and traditional loss can also be found in \cite{zhang2017deep,son2019towards}.

The mix model has the following form
\begin{equation}\label{SAA-MGAN}
\begin{aligned}
\min_{x\in X}\max_{y\in Y} \bigg\{ \hat{f}_{N}(x,y): = & \frac{1}{N}\sum_{i=1}^N \lambda\cdot\psi\big(\xi_1^i, G(x,\xi_2^i)\big) + \\
&\Big( \frac{1}{N}\sum_{i=1}^N \Big( \log (D(y,\xi_1^i)) + \log (1- D(y,G(x,\xi_2^i)))\Big) \bigg\},
\end{aligned}
\end{equation}
where $X$, $Y$ are two bounded boxes,  $\{(\xi^{i}_{1},\xi^{i}_{2})\}_{i=1}^{N}$ is the finite collected data, $\xi^{i}_{2}$ is the original data while the $\xi^{i}_{1}$ is the corresponding label. Problem \eqref{SAA-MGAN} is a special case of problem (\ref{SAA_MINIMAX}), which can be viewed as a discrete generative adversarial problem \eqref{SAA-GAN} with an extra classical supervision term.
The classical supervision part, i.e.,
\begin{equation}\label{DNN}
\min_{x\in X}\frac{1}{N}\sum_{i=1}^{N}\Big[ \psi\big(\xi^{i}_1, G(x,\xi^{i}_2)\big) \Big]
\end{equation}
is to minimize the difference between the output of given $\xi_{2}$ on $G$ and its corresponding label $\xi_{1}$.
The model can be regarded as a combination of a classical supervised learning problem and a generative adversarial problem with a trade-off parameter $\lambda\in \lbrack0,\infty)$. When $\lambda=0$, problem \eqref{SAA-MGAN} reduces to a classical supervised learning problem \eqref{DNN}. When $\lambda\rightarrow\infty$, problem \eqref{SAA-MGAN} tends to a vanilla GAN.

The fundoscopic exam is an important procedure to provide information to diagnose different retinal degenerative diseases such as Diabetic Retinopathy, Macular Edema and Cytomegalovirus Retinitis. A high accurate system to sketch out the blood vessel and find abnormalities on fundoscopic images is necessary. Although the supervision deep learning frameworks such as Unet are able to segment macro vessel accurately, they failed for segmenting microvessels with high certainty. In this part, we will train a Unet as a generator in our framework on DRIVE data. We download the data
includes 20 eye blood vessel images with manual segmentation label from the open
source website (https://drive.grand-challenge.org/). We applied 16 images as training data while the other 4 as testing data. In this experiment, the structure of segmentation model $G$ is U-net \cite{ronneberger2015u} which includes 18 layers with $n=121435$ parameters, and the structure of discrimination model $D$ is a deep convolutional neural network which contains 5 convolutional layers and 1 fully connected layer with $m=142625$ parameters. The feasible sets $X$ and $Y$ are set as $[-5,5]^{n}$ and $[-5,5]^{m}$, respectively.
We use activation function  GELU except Sigmoid at the output layer of $D$ and $G$. We compare our results based on problem \eqref{SAA-MGAN} with some existing models.
In our experiment, we use $\lambda=10$ and $V^z_k$ with $L=4$.

We compute traditional metrics such as F1-score, Sensitivity, Specificity and Accuracy. The form of these metrics are given as follows:
	\begin{footnotesize}	$$\mathrm{Sensivity}=\frac{1}{N}\sum_{i=1}^{N}\frac{|\mathrm{GT}_{i}\cap\mathrm{SR}_{i}|}{|\mathrm{GT}_{i}\cap\mathrm{SR}_{i}|+|\mathrm{GT}_{i}\cap\mathrm{SR}_{i}^{c}|},$$
		$$\mathrm{Specificity}=\frac{1}{N}\sum_{i=1}^{N}\frac{|\mathrm{GT}_{i}^{c}\cap\mathrm{SR}_{i}^{c}|}{|\mathrm{GT}_{i}^{c}\cap\mathrm{SR}_{i}^{c}|+|\mathrm{GT}_{i}^{c}\cap\mathrm{SR}_{i}|},$$
				$$\mathrm{Accuracy}=\frac{1}{N}\sum_{i=1}^{N}\frac{|\mathrm{GT}_{i}\cap\mathrm{SR}_{i}|+|\mathrm{GT}_{i}^{c}\cap\mathrm{SR}_{i}^{c}|}{\Omega},$$
        $$\mathrm{Precision}=\frac{1}{N}\sum_{i=1}^{N}\frac{|\mathrm{GT}_{i}\cap\mathrm{SR}_{i}|}{|\mathrm{GT}_{i}\cap\mathrm{SR}_{i}|+|\mathrm{GT}_{i}^{c}\cap\mathrm{SR}_{i}|},$$
		$$\mathrm{F1}=\frac{2\mathrm{Precision}\times\mathrm{Sensitivity}}{\mathrm{Precision}+\mathrm{Sensitivity}},$$
\end{footnotesize}
where $\Omega$ is the Universe set for all index of pixels in image, $\mathrm{GT}_{i}$ is the grouth truth vessel index for $i$-th image, $\mathrm{SR}$ is the index of pixel labelled as vessel in $i$-th image's segmentation result.
Furthermore, we compute Area Under Curve-Receiver Operating Characteristic (AUC-ROC) \cite{bradley1997use} and Structural Similarity Index Measure (SSIM) \cite{wang2004image}.

Table \ref{tab6} shows that the QNSTR algorithm for solving problem \eqref{SAA-MGAN} is more promising for blood vessel segmentation. In Figure \ref{DRIVE}, we visualize the error between vessel map generated by problem \eqref{SAA-MGAN} with the QNSTR algorithm and the manual segmentation. In Figure \ref{DRIVE1}, we compare the segmentation results of problem \eqref{SAA-MGAN} based on the QNSTR algorithm and the alternating Adam approach, respectively.

\begin{table}
	\centering
  \begin{footnotesize}
	\begin{tabular}{cccccccccc}
		Molds & F1 score & Sensitivity & Specificity & Accuracy & AUC-ROC &  SSIM \\
		\hline\\
		Residual Unet\cite{alom2018recurrent} & 0.8149 & 0.7726 & 0.9820 & 0.9553 & 0.9779 &  - \\
		RecurrentUnet\cite{alom2018recurrent} & 0.8155 & 0.7751 & 0.9816 & 0.9556 & 0.9782 &  - \\
		R2Unet\cite{alom2018recurrent} & 0.8171 & 0.7792 & 0.9813 & 0.9556 & 0.9784 & - \\
		DFUNet\cite{jin2019dunet} & 0.8190 & 0.7863 & 0.9805 & 0.9558 & 0.9779  & 0.8789 \\
		IterNet\cite{li2020iternet} & 0.8205 & 0.7735 & 0.9838 & 0.9573 & 0.9816 & 0.9008 \\
\textbf{ Alt Adam} & {0.7856} & {0.7830} & {0.9807} & {0.9551} & {0.9747} &  {0.8908} \\
		\textbf{QNSTR} & {0.7990} & {0.8327} & {0.9808} & {0.9648} & {0.9791} &  {0.8936} \\
		\hline
	\end{tabular}
 \end{footnotesize}
	\caption{The performance of QNSTR algorithm and alternating Adam for model \eqref{SAA-MGAN},  and other methods in \cite{alom2018recurrent,jin2019dunet,li2020iternet} for model (\ref{DNN})}
	\label{tab6}
\end{table}

\begin{figure}[htb]
\label{DRIVE}
	\centering
		\includegraphics[width=1\textwidth]{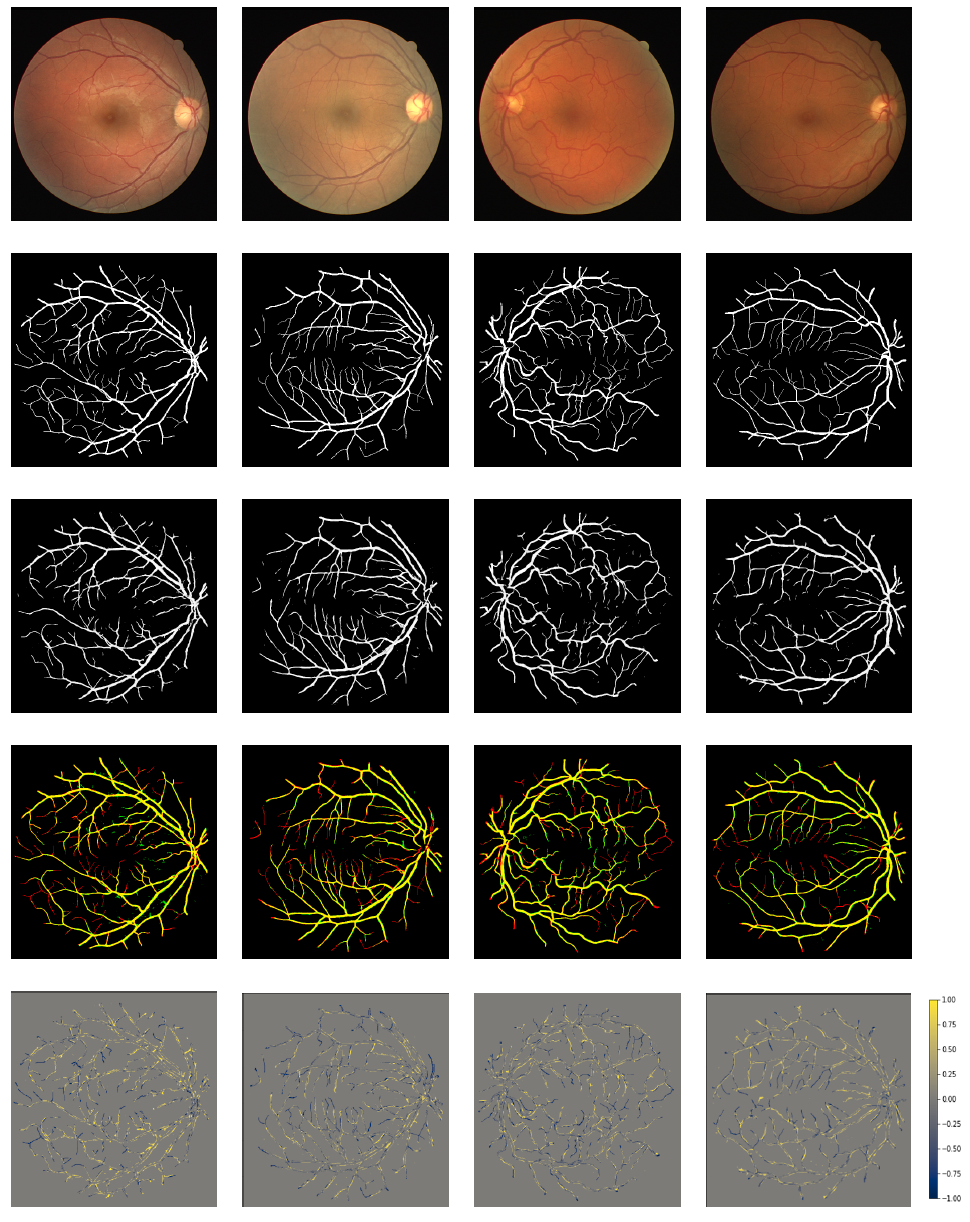}
	\caption{Row 1. fundus image, Row 2. manual segmentation, Row 3. vessel map generated by GANs with QNSTR algorithm, Row 4. yellow(correct); red(wrong); green(missing), Row 5. Error  }
	\label{DRIVE}
\end{figure}

\begin{figure}[htb]
\label{DRIVE1}
	\centering
		\includegraphics[width=1\textwidth]{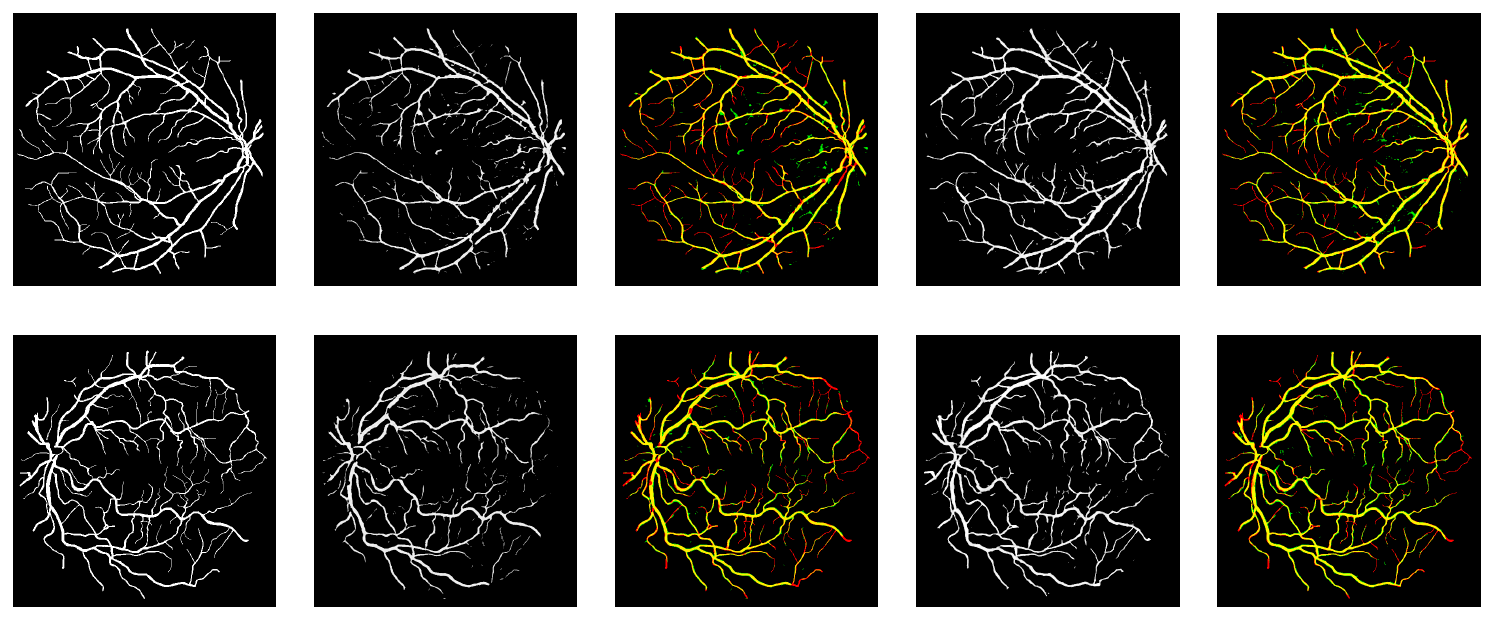}
	\caption{Comparison of Alternating Adam and QNSTR Algorithm. Columns from left to right are: 1. manual segmentation, 2. vessel map generated by GANs with Alternating Adam, 3. yellow(correct); red(wrong); green(missing) of Alternating Adam, 4. vessel map generated by GANs with proposed Algorithm, 5. yellow(correct); red(wrong); green(missing) of QNSTR Algorithm.}
	\label{DRIVE1}
\end{figure}

\section{Conclusion}\label{Sec5}

In this paper, we propose a new QNSTR algorithm for solving the large-scale min-max optimization problem (\ref{SAA_MINIMAX}) via  the nonmonotone VI (\ref{gs10}).  Based on the structure of the problem, we use a smoothing function $F(\cdot, \mu)$ to approximate the nonsmooth function $F_N$, and consider the smoothing least squares problem (\ref{TRM_min_smooth}). We adopt an adaptive quasi-Newton formula in \cite{zhou2010global} to approximate the Hessian matrix and solve a strongly convex quadratic program with ellipse constraints in a low-dimensional subspace at each step of the QNSTR algorithm.  We prove the global convergence of the QNSTR algorithm to
a stationary point of the least squares problem, which is an $\epsilon$-first-order stationary point of the min-max optimization problem if every element of the generalized Jacobian of  $F_N$ is nonsingular at the point.
In our numerical experiments, we test the QNSTR algorithm by using two real data sets: MNIST data and DRIVE data.  Preliminary numerical results validate that the QNSTR algorithm outperforms some existing algorithms.

{\bf Acknowledgment} We would like to thank Prof. Yinyu Ye for his suggestion to add more search directions in the Dimension Reduced Second-Order Method proposed in \cite{zhang2022drsom}.  This work is supported by University Research Facility in Big Data Analytics, the PolyU research
project “Analysis and applications of low information
 density signal processing”, and Hong Kong Research Grant
 Council PolyU 15300021.


\vspace{0.5in}

\section*{Appendix A: Smoothing approximation of $F_{N}(z)$}

We consider problem (\ref{SAA-GAN}) with a two-layers discriminator and a two-layers generator using MNIST handwritting data. All notations are the same as those in Section 4.1. Set $X=[-5,5]$, $Y=[-5,5]$, $N_{G}^{1}=N_{D}^{1}=64$, $N_{G}^{1}=N_{D}^{1}=128$, $\mu=10^{-t}, t=0,1,\dots,6$ and $N=1000, 2000, 10000$, respectively.
Based on the uniform distribution over $[-10,10]^{n+m}$, we generate $1000$ points  $z^i\in [-10,10]^{n+m}$, $i=1, \cdots, 1000$. Denote the ``approximation error" by
$$\text{approximation error} := \frac{1}{1000} \sum_{i=1}^{1000} \Vert F_{N,\mu}(z^i)-F_{N}(z^i)\Vert_{\infty}.$$
In Figure \ref{fig_exp}, we plot the average of ``approximation error" under different choices of $\mu$ and $N$ with 20 sets of $1000$ points in $[-10,10]^{n+m}$. From the figure, we can observe that for each $N=1000, 2000, 10000$, the ``approximation error" converges to zero as $\mu$ tends to zero.

\begin{figure}[htb]
	\centering
        \includegraphics[width=1\textwidth]{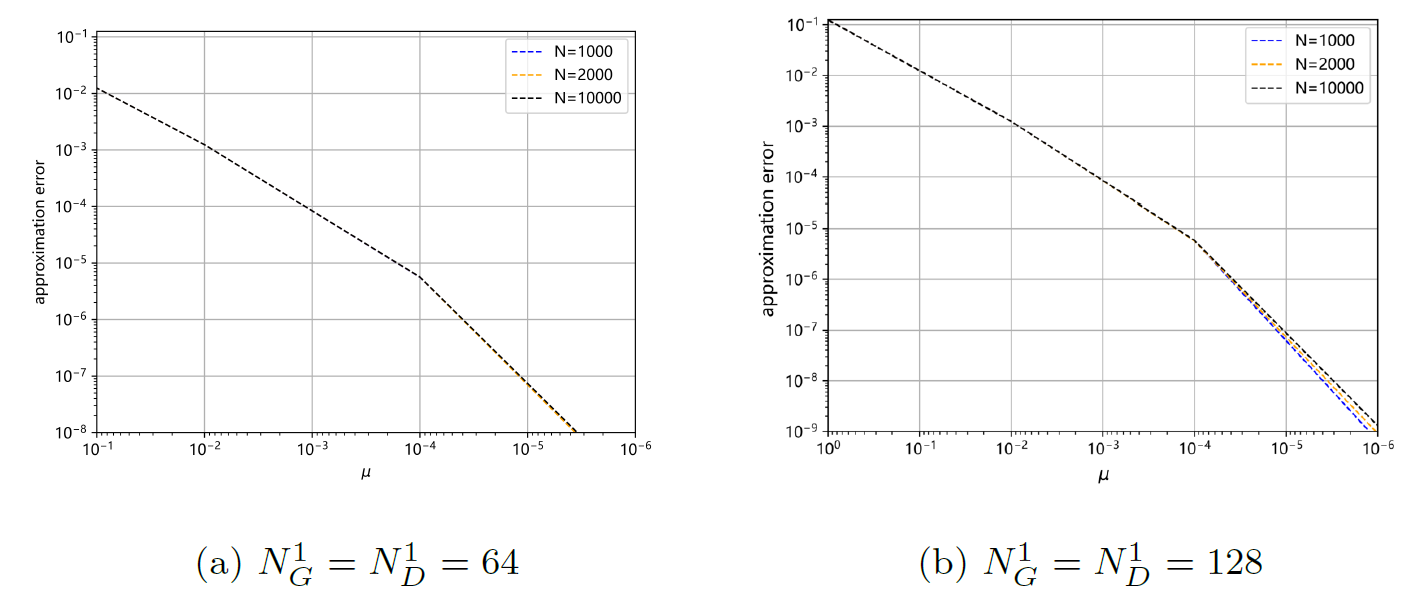}
	\caption{Smoothing approximation error of  $F_{N,\mu}(z)$ to $F_{N}(z)$ for $N=1000,2000,10000$. }
	\label{fig_exp}
\end{figure}

\section*{Appendix B: Proof of Theorem \ref{th4-1}}

As we discussed in subsection 3.2,  there is a positive number $M_1$ such that
$\|H\|\le M_1$ for any $H\in\partial (\nabla r(z))$,  $z\in S(R_0)$. Moreover
 there is a positive number $M_2$ such that $\norm{\nabla r(z) - \nabla r(z')} \leq M_2 \norm{z - z'}, \,\, \forall z, z' \in S(R_0)$.

Since $z_k$ is updated only when $\rho_k>\eta$ in Algorithm 1, we have $z_{k}\in S(R_{0})$. From the continuous differentiability  of $r$ over $S(R_{0})$, there is a positive number $M_3$ such that  $\|J_{k}\|\le M_3$ for $k \in \mathbb{N}$. Moreover, from \eqref{GN_BFGS}, $\Vert A_{k}\Vert=\max\{\Vert B_{k}\Vert, \Vert C_{k}\Vert\}\le \max\{\gamma, \|F(z_0)\|\}$.    Hence  $\norm{H_k}\leq M_{3}+\max\{\gamma, \|F(z_0)\|\}$. Let $M\ge \max \{M_2, M_3+\gamma, M_3+\|F(z_0)\|\}$.  Then we have
$\norm{\nabla r(z) - \nabla r(z')} \leq M \norm{z - z'}$ for any $z,z'\in S(R_0)$ and $\norm{H_k} \leq M$  for $k\in \mathbb{N}$.

In the rest of Appendix B,  we prove $\lim_{k\to \infty} \|g_k\|=0$.

If $g_k=0$ for some $k>0$, then Algorithm \ref{Alg4} terminates and Theorem \ref{th4-1} holds. In the remainder, we only consider the case that $g_k\neq 0$.

We next consider the following one-dimensional problem:
\begin{equation}
\label{Cauchy}
\min\limits_{\tau}  m_{k}(\tau\alpha^{s}_{k}) \quad \mathrm{s.t.} \quad \Vert\tau V_{k}\alpha^{s}_{k}\Vert\leq\Delta_{k},\quad\tau>0,
\end{equation}
where $\alpha^{s}_{k}$ is an optimal solution of
\begin{equation}
\label{gs9}
    \min_{\alpha}~ c^{\top}_{k}\alpha\quad\mathrm{s.t.}~\Vert V_{k}\alpha\Vert\leq\Delta_{k}.
\end{equation}
Let $\tau_k$ denote an arbitrary optimal solution of problem \eqref{Cauchy}. Then $\alpha^{C}_{k}:=\tau_k \alpha^{s}_{k}$ is a feasible solution of problem \eqref{gs20}.

In what follows, we give the closed form  of $\alpha^{C}_{k}$ step by step.
For this purpose, we consider the KKT condition of problem \eqref{gs9} as follows:
\begin{equation*}
\begin{split}
    \lambda G_{k}\alpha+c_{k}=0, \quad \quad &  0\le \lambda \, \bot \, \Delta_{k}^{2}-\alpha^\top G_{k}\alpha\ge 0,
    \end{split}
\end{equation*}
where $\lambda$ is a multiplier. Since $g_k\neq 0$ and $V_k$ is of full column rank, we have   $c_{k}\neq0$. Thus, $\lambda>0$, and the KKT system gives
\begin{equation*}
\alpha=-\frac{1}{\lambda}G^{-1}_{k}c_{k},\qquad \Delta_{k}^{2} =\alpha^\top G_{k}\alpha.
\end{equation*}
Then we obtain $\frac{1}{\lambda}=\sqrt{\frac{\Delta_{k}^{2}}{(G^{-1}_{k}c_{k})^{\top}G_{k}(G^{-1}_{k}c_{k})}}$,
and the solution of (\ref{gs9}) can written as
\begin{equation*}
\begin{split}
    \alpha^{s}_{k}&=-\frac{\Delta_{k}}{\sqrt{(G_{k}^{-1}c_{k})^{\top}G_{k}(G_{k}^{-1}c_{k})}}G^{-1}_{k}c_{k}\\
    &=-\frac{\Delta_{k}}{\sqrt{c_{k}^{\top}G^{-1}_{k}c_{k}}}G_{k}^{-1}c_{k}\\
    &=-\frac{\Delta_{k}}{\sqrt{g_{k}^{\top}V_{k}(V_{k}^{\top}V_{k})^{-1}V^{\top}_{k}g_{k}}}G_{k}^{-1}c_{k}\\
    &=-\frac{\Delta_{k}}{\Vert g_{k}\Vert}G_{k}^{-1}c_{k}.
\end{split}
\end{equation*}

Hence, the objective function of (\ref{Cauchy}) has the following form
\begin{equation*}
\begin{split}
~m_{k}(\tau \alpha^{s}_{k})&=r(z_k)+ \tau c_{k}^{\top}\alpha^s_k+\frac{\tau^2}{2}(\alpha_k^s)^{\top}Q_{k}\alpha^s_k\\
&= r(z_k)  -\tau\frac{\Delta_{k}}{\Vert g_{k}\Vert}c^{\top}_{k}G^{-1}_{k}c_{k}+\frac{\tau^{2}}{2}\left(\frac{\Delta_{k}}{\Vert g_{k}\Vert}\right)^{2} (G^{-1}_{k}c_{k})^{\top}Q_{k}G^{-1}_{k}c_{k}\\
    &=r(z_k)-\Delta_{k}\Vert g_{k}\Vert\tau + \frac{\tau^{2}}{2}\left(\frac{\Delta_{k}}{\Vert g_{k}\Vert}\right)^{2} g^{\top}_{k}H_{k}g_{k}
    \end{split}
\end{equation*}
and the constraint of (\ref{Cauchy}) satisfies
\begin{equation*}
\begin{split}
    \Vert\tau \alpha^{s}_{k}\Vert_{G_{k}}&=\tau \frac{\Delta_{k}}{\Vert g_{k}\Vert}\sqrt{c^{\top}_{k}G_{k}^{-1}G_{k}G^{-1}_{k}c_{k}}
    =\tau \Delta_{k}\leq\Delta_{k},
    \end{split}
\end{equation*}
which is equivalent to $0< \tau \leq 1$.

Therefore, problem \eqref{Cauchy} can be equivalently rewritten as
\begin{equation}
\label{Cauchy1}
\begin{array}{cl}
\min\limits_{\tau } & -\Delta_{k}\Vert g_{k}\Vert\tau + \frac{\tau^{2}}{2}\left(\frac{\Delta_{k}}{\Vert g_{k}\Vert}\right)^{2} g^{\top}_{k}H_{k}g_{k}, \quad \mathrm{s.t.} \quad  0< \tau \leq 1.
\end{array}
\end{equation}

Since $H_{k}$ is positive definite (see \eqref{GN_BFGS}, \eqref{GN_SBFGS11} and \eqref{GN_SBFGS21}), problem \eqref{Cauchy1} has the unique solution
\begin{equation*}
    \tau_{k}=\min\left(\Vert g_{k}\Vert^{3}/(\Delta_{k}g^{\top}_{k}H_{k}g_{k}),1\right).
\end{equation*}
Finally, we obtain
\begin{equation}
\label{Cauchy_step}
    \alpha^{C}_{k} = -\min\left(\Vert g_{k}\Vert^{3}/(\Delta_{k}g^{\top}_{k}H_{k}g_{k}),1\right)\frac{\Delta_{k}}{\Vert g_{k}\Vert}G_{k}^{-1}c_{k}.
\end{equation}

\begin{lemma}\label{Lem3}
Let $\alpha_{k}$ be the unique optimal solution of subproblem \eqref{gs20} in the $k$-th step.
Then
\begin{equation*}
    m_{k}(0)-m_{k}(\alpha_{k})\geq\frac{1}{2}\Vert g_{k}\Vert\min\left(\Delta_{k}, \frac{\Vert g_{k}\Vert}{\Vert H_{k}\Vert}\right).
\end{equation*}
\end{lemma}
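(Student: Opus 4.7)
The plan is to establish the standard Cauchy point decrease bound. Since $\alpha_k$ is the global optimizer of the strongly convex quadratic program \eqref{gs20} and $\alpha_k^C$ (defined in \eqref{Cauchy_step}) lies in its feasible set, we have $m_k(\alpha_k) \leq m_k(\alpha_k^C)$, so it suffices to show
\[
m_k(0) - m_k(\alpha_k^C) \geq \tfrac{1}{2}\|g_k\|\min(\Delta_k, \|g_k\|/\|H_k\|).
\]
The key object is the one-dimensional function $\phi(\tau) := m_k(\tau \alpha_k^s) - r(z_k) = -\Delta_k\|g_k\|\tau + \tfrac{\tau^2}{2}(\Delta_k/\|g_k\|)^2 g_k^\top H_k g_k$ derived in the excerpt (via \eqref{Cauchy1}), minimized over $0<\tau\leq 1$, with optimum $\tau_k=\min\bigl(\|g_k\|^3/(\Delta_k g_k^\top H_k g_k),\,1\bigr)$.

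I would then split into two cases based on which argument achieves the minimum in $\tau_k$. In the \emph{interior case} $\tau_k = \|g_k\|^3/(\Delta_k g_k^\top H_k g_k) < 1$, direct substitution into $\phi$ gives
\[
m_k(0) - m_k(\alpha_k^C) = \tfrac{1}{2}\,\frac{\|g_k\|^4}{g_k^\top H_k g_k} \geq \tfrac{1}{2}\,\frac{\|g_k\|^2}{\|H_k\|},
\]
using the Rayleigh bound $g_k^\top H_k g_k \leq \|H_k\|\|g_k\|^2$. Moreover, $\tau_k < 1$ forces $g_k^\top H_k g_k > \|g_k\|^3/\Delta_k$, hence $\|g_k\|/\|H_k\| < \Delta_k$, so the min in the target bound is indeed $\|g_k\|/\|H_k\|$. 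In the \emph{boundary case} $\tau_k = 1$, which requires $g_k^\top H_k g_k \leq \|g_k\|^3/\Delta_k$, substitution yields
\[
m_k(0) - m_k(\alpha_k^C) = \Delta_k\|g_k\| - \tfrac{1}{2}(\Delta_k/\|g_k\|)^2 g_k^\top H_k g_k \geq \Delta_k\|g_k\| - \tfrac{1}{2}\Delta_k\|g_k\| = \tfrac{1}{2}\Delta_k\|g_k\|.
\]

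Combining the two cases gives the desired inequality, and then applying $m_k(\alpha_k)\leq m_k(\alpha_k^C)$ concludes the proof. The computations are purely algebraic once the closed-form Cauchy step from the excerpt is in hand; the only mild subtlety is verifying that the two cases consistently align with the $\min$ on the right-hand side (i.e., checking that in each case the attained bound matches or dominates the claimed $\min(\Delta_k,\|g_k\|/\|H_k\|)$ term). Positive-definiteness of $H_k$, guaranteed by \eqref{GN_SBFGS}--\eqref{GN_SBFGS1}, ensures $g_k^\top H_k g_k > 0$ so that all divisions above are legitimate given $g_k \neq 0$.
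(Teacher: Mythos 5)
Your proof is correct and follows essentially the same route as the paper: compare $m_k(\alpha_k)$ with the Cauchy point $m_k(\alpha_k^C)$, then split on whether $\tau_k<1$ or $\tau_k=1$ and substitute the closed form of $\alpha_k^C$, using $g_k^\top H_k g_k\leq \|H_k\|\|g_k\|^2$ in the first case. Your extra observation that each case's bound dominates the $\min$ term is a slight tightening of the paper's terser "combining the two cases" conclusion, but the argument is the same.
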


\begin{proof}
Since $\alpha_{k}^{C}$ is a feasible solution of problem \eqref{gs20}, we have
\begin{equation*}
    m_{k}(0)-m_{k}(\alpha_{k})\geq m_{k}(0)-m_{k}(\alpha_{k}^{C}).
\end{equation*}
In what follows, we verify
    \begin{equation*}
        m_{k}(0)-m_{k}(\alpha^{C}_{k})\geq\frac{1}{2}\Vert g_{k}\Vert\min\left(\Delta_{k}, \frac{\Vert g_{k}\Vert}{\Vert H_{k}\Vert}\right).
    \end{equation*}

If $\Vert g_{k}\Vert^{3}/(\Delta_{k}g^{\top}_{k}H_{k}g_{k})<1$, substituting $\alpha_{k}^{C}$ (see \eqref{Cauchy_step}) into \eqref{gs20}, we have
\begin{equation}
\label{case1}
    \begin{split}
        m_{k}(0)-m_{k}(\alpha^{C}_{k}) &= -c_{k}^{\top}\alpha^{C}_{k}-\frac{1}{2}(\alpha^{C}_{k})^{\top}Q_{k}\alpha^{C}_{k}\\
        &= \frac{\Vert g_{k}\Vert^{2}}{g^{\top}_{k}H_{k}g_{k}}c^{\top}_{k}G^{-1}_{k}c_{k}-\frac{1}{2}\frac{\Vert g_{k}\Vert^{4}}{(g^{\top}_{k}H_{k}g_{k})^{2}}c^{\top}_{k}G^{-1}_{k}Q_{k}G_{k}^{-1}c_{k}\\
        &= \frac{\Vert g_{k}\Vert^{4}}{ g^{\top}_{k}H_{k}g_{k} }-\frac{1}{2}\frac{\Vert g_{k}\Vert^{4}}{(g^{\top}_{k}H_{k}g_{k})^{2}}g_{k}^{\top}H_{k}g_{k}\\
        &= \frac{1}{2}\frac{\Vert g_{k}\Vert^{4}}{g^{\top}_{k}H_{k}g_{k}}  \geq \frac{1}{2}\frac{\Vert g_{k}\Vert^{4}}{\Vert H_{k}\Vert\Vert g_{k}\Vert^{2}}
        = \frac{1}{2} \frac{\Vert g_{k}\Vert^2}{\Vert H_{k}\Vert}.
    \end{split}
\end{equation}

If $\Vert g_{k}\Vert^{3}/(\Delta_{k}g^{\top}_{k}H_{k}g_{k})\geq1$ (i.e., $g^{\top}_{k}H_{k}g_{k}\leq\frac{\Vert g_{k}\Vert^{3}}{\Delta_{k}}$), we have
\begin{equation}
\label{case2}
    \begin{split}
        m_{k}(0)-m_{k}(\alpha^{C}_{k})&=-c_{k}^{\top}\alpha^{C}_{k}-\frac{1}{2}(\alpha^{C}_{k})^{\top}Q_{k}\alpha^{C}_{k}\\
        &=\frac{\Delta_{k}}{\Vert g_{k}\Vert}\Vert c^{\top}_{k}G^{-1}_{k}c_{k}\|-\frac{1}{2}\frac{\Delta^{2}_{k}}{\Vert g_{k}\Vert^{2}}c^{\top}_{k}G_{k}^{-1}Q_{k}G_{k}^{-1}c_{k}\\
        &=\Delta_{k}\Vert g_{k}\Vert -\frac{1}{2}\frac{\Delta_{k}^{2}}{\Vert g_{k}\Vert^{2}}g^{\top}_{k}H_{k}g_{k}\\
        &\geq\Delta_{k}\Vert g_{k}\Vert-\frac{1}{2}\frac{\Delta_{k}^{2}}{\Vert g_{k}\Vert^{2}}\frac{\Vert g_{k}\Vert^{3}}{\Delta_{k}}\\
        &=\frac{1}{2} \Vert g_{k}\Vert \Delta_{k}.
    \end{split}
\end{equation}
Combining \eqref{case1} and \eqref{case2}, we complete the proof.\qed
\end{proof}

\begin{lemma}
\label{lemma_c}
Under assumptions of Theorem \ref{th4-1},
   for any index $k$, there exists a $\bar{k}>k$ such that $\Vert g_{\bar{k}}\Vert<\Vert g_{k}\Vert/2$.
\end{lemma}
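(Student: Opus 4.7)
The plan is a standard trust-region contradiction argument adapted to the subspace quadratic model $m_k$. Assume, toward a contradiction, that there exists an index $k_0$ such that $\|g_k\| \ge \delta := \|g_{k_0}\|/2 > 0$ for every $k \ge k_0$. The goal is to combine Lemma \ref{Lem3}, the uniform bound $\|H_k\| \le M$, and the Lipschitz continuity of $\nabla r$ assumed in Theorem \ref{th4-1} to force $r(z_k)$ to decrease by a uniformly positive amount infinitely often, contradicting $r(z_k) \ge 0$.

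The first step is to control $|\rho_k - 1|$ in terms of $\Delta_k$ and $\|g_k\|$. Writing $s_k := V_k \alpha_k$, the trust-region constraint gives $\|s_k\| \le \Delta_k$. The Lipschitz bound then yields $|r(z_k + s_k) - r(z_k) - g_k^{\top} s_k| \le \tfrac{M}{2}\Delta_k^2$. Since $c_k^{\top}\alpha_k = g_k^{\top} s_k$ and $\alpha_k^{\top} Q_k \alpha_k = s_k^{\top} H_k s_k$ with $\|H_k\| \le M$, one gets $|r(z_k + s_k) - m_k(\alpha_k)| \le M \Delta_k^2$. Dividing by the decrease estimate in Lemma \ref{Lem3} produces a bound of the form $|\rho_k - 1| \le 2M\Delta_k/\delta$ as soon as $\Delta_k \le \delta/M$.

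The second step uses this inequality to show $\Delta_k$ is bounded away from zero for $k \ge k_0$. There is a threshold $\Delta^{\ast} > 0$ depending only on $\delta$, $M$ and $\zeta_1$ such that $\Delta_k \le \Delta^{\ast}$ forces $\rho_k \ge \zeta_1$, hence $\Delta_{k+1} \ge \Delta_k$ by the update rule of Algorithm \ref{Alg4}. Therefore $\Delta_k \ge \Delta_{\min} := \min(\Delta_{k_0}, \beta_1 \Delta^{\ast})$ for all $k \ge k_0$. The third step combines this with Lemma \ref{Lem3}: on any iteration with $\rho_k > \eta$ one has $r(z_k) - r(z_{k+1}) \ge \tfrac{\eta \delta}{2} \min(\Delta_{\min}, \delta/M) > 0$. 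If there are infinitely many such successful iterations, summation forces $r(z_k) \to -\infty$, contradicting $r \ge 0$; if only finitely many are successful, then eventually every iteration has $\rho_k < \zeta_1$, so $\Delta_{k+1} = \beta_1 \Delta_k$ and $\Delta_k \to 0$, contradicting the lower bound $\Delta_{\min}$. Either branch contradicts the assumption that $\|g_k\| \ge \delta$ for all $k \ge k_0$, proving the lemma.

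The main obstacle is the first step: the subspace formulation means $\alpha_k$ lives in $\mathbb{R}^L$ while the natural Taylor-type bounds live in the ambient $\mathbb{R}^{n+m}$. The identities $s_k = V_k \alpha_k$, $c_k^{\top} \alpha_k = g_k^{\top} s_k$, and $\alpha_k^{\top} Q_k \alpha_k = s_k^{\top} H_k s_k$, together with $\|s_k\| \le \Delta_k$, are the bridge that lets the standard trust-region estimates go through unchanged; once that bridge is in place, the remainder of the argument is textbook.
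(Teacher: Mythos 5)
Your overall strategy is the same as the paper's: negate the conclusion to get $\|g_k\|\ge\delta$ for all $k\ge k_0$, combine the Cauchy-decrease bound of Lemma \ref{Lem3} with a Taylor estimate to show $|\rho_k-1|=O(\Delta_k/\delta)$ for small $\Delta_k$, deduce that $\Delta_k$ stays bounded away from zero, and then derive a contradiction from the accumulated decrease of $r$. The subspace identities $c_k^{\top}\alpha_k=g_k^{\top}V_k\alpha_k$ and $\alpha_k^{\top}Q_k\alpha_k=(V_k\alpha_k)^{\top}H_k(V_k\alpha_k)$ are exactly the bridge the paper uses in its estimate \eqref{gs5}, so that part is sound (up to the harmless difference between your constant $M\Delta_k^2$ and the paper's $3M\Delta_k^2/2$).

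One step fails as written. In your final dichotomy you assert that every iteration with $\rho_k>\eta$ satisfies $r(z_k)-r(z_{k+1})\ge\frac{\eta\delta}{2}\min(\Delta_{\min},\delta/M)>0$. Algorithm \ref{Alg4} only requires $0\le\eta<\zeta_1$, so $\eta$ may equal zero, in which case your lower bound is $0$ and the summation argument collapses: accepted steps give $r(z_k)-r(z_{k+1})>0$ but no uniform gap, so $r$ need not tend to $-\infty$. The paper sidesteps this by running the accumulation over iterations with $\rho_k\ge\zeta_1$: these must occur infinitely often (otherwise eventually every iteration has $\rho_k<\zeta_1$, hence $\Delta_{k+1}=\beta_1\Delta_k$ and $\Delta_k\to 0$, contradicting the lower bound on $\Delta_k$), and on them $r(z_k)-r(z_{k+1})\ge\zeta_1\bigl(m_k(0)-m_k(\alpha_k)\bigr)\ge\frac{\zeta_1\delta}{2}\min(\Delta_{\min},\delta/M)>0$ because $\zeta_1>0$; the convergence of the monotone bounded sequence $\{r(z_k)\}$ then forces $\Delta_k\to 0$ along that subsequence, the desired contradiction. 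With that substitution of $\zeta_1$ for $\eta$ (and the minor point that your threshold $\Delta^{\ast}$ should also be capped by $R_0$, as the paper's $\tilde{\Delta}$ is, so that the segment from $z_k$ to $z_k+V_k\alpha_k$ stays in $S(R_0)$ where the Lipschitz bound is assumed), your argument coincides with the paper's proof.
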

\begin{proof}
    We give the proof by contradiction.
    Suppose that there exists a $\hat{k}$ with $\Vert g_{\hat{k}}\Vert=2\epsilon$ for some $\epsilon>0$, and $\Vert g_{k}\Vert{\geq}\epsilon$, $\forall k\geq\hat{k}$. Then we know from Lemma \ref{Lem3} that
    \begin{equation}
    \label{Cauchy_eps}
    m_{k}(0)-m_{k}(\alpha_{k})\geq\frac{1}{2}\Vert g_{k}\Vert\min\left(\Delta_{k},\frac{\Vert g_{k}\Vert}{\Vert H_{k}\Vert}\right)\geq \frac{1}{2}\epsilon\min\left(\Delta_{k},\frac{\epsilon}{M}\right).
    \end{equation}

According to the definition of $\rho_{k}$ in \eqref{rho}, we have
\begin{equation}
\label{gs2}
    \begin{split}
        |\rho_{k}-1|&=\left|\frac{r(z_{k})-r(z_{k}+V_{k}\alpha_{k})-(m_{k}(0)-m_{k}(\alpha_{k}))}{m_{k}(0)-m_{k}(\alpha_{k})}\right|\\
        &=\left|\frac{m_{k}(\alpha_{k})-r(z_{k}+V_{k}\alpha_{k})}{m_{k}(0)-m_{k}(\alpha_{k})}\right|.
    \end{split}
\end{equation}

By Taylor expansion, we have
\begin{equation*}
r(z_{k}+V_{k}\alpha_{k})=r(z_{k})+g_k^{\top}V_{k}\alpha_{k}+\int_{0}^{1}(\nabla r(z_{k}+tV_{k}\alpha_{k})- \nabla r(z_{k}))^{\top}V_{k}\alpha_{k}\d t.
\end{equation*}
Then
\begin{equation}
\label{gs5}
\begin{split}
    |m_{k}(\alpha_{k})-r(z_{k}+V_{k}\alpha_{k})|&=\left|\frac{1}{2}\alpha_{k}^{\top}Q_{k}\alpha_{k}-\int_{0}^{1}(\nabla r(z_{k}+tV_{k}\alpha_{k})- \nabla r(z_{k}))^{\top}V_{k}\alpha_{k}\d t \right|\\
    &\leq(M /2)\Vert V_{k}\alpha_{k}\Vert^{2}+M \Vert V_{k}\alpha_{k} \Vert^{2}\leq 3\Delta_{k}^{2}M/2,
    \end{split}
\end{equation}
where the first inequality follows from $Q_k=V_{k}^{\top}H_{k}V_{k}$ and the mean-value theorem, and the second inequality follows from $\Vert V_{k}\alpha_{k} \Vert \leq \Delta_{k}$ due to the constraint in problem \eqref{gs20}.

Then, by \eqref{Cauchy_eps}, \eqref{gs2} and  \eqref{gs5}, we get
\begin{equation*}
    |\rho_{k}-1|\leq\frac{3\Delta_{k}^{2}M/2}{(\epsilon/2)\min(\Delta_{k},\epsilon/M)}.
\end{equation*}

Denote
\begin{equation*}
    \tilde{\Delta}:=\min\left(\frac{(1-\zeta_{1})\epsilon}{3M}, R_{0}\right).
\end{equation*}
For any $\Delta_{k}\leq\tilde{\Delta}$,
we have
$$
\begin{aligned}
|\rho_{k}-1| &\leq\frac{3\Delta_{k}^{2}M/2}{(\epsilon/2)\min(\Delta_{k},\epsilon/M)} =\frac{3M \Delta_{k}^{2}}{\epsilon\Delta_{k}}
=\frac{3M\Delta_{k}}{\epsilon}\leq\frac{3M\tilde{\Delta}}{\epsilon}\leq 1-\zeta_{1},
\end{aligned}
$$
which implies $\rho_{k}\geq \zeta_{1}$, where the first equality follows from the fact that
$$\Delta_{k}\leq\tilde{\Delta}=\min\left(\frac{(1-\zeta_{1})\epsilon}{3M},R_{0}\right)\leq \frac{\epsilon}{3M}<\frac{\epsilon}{M}.$$
The above observation together with update rules in Algorithm \ref{Alg4} indicates that $\Delta_{k+1}\geq\Delta_{k}$ when $\Delta_{k}\leq\tilde{\Delta}$ (and thus $\rho_{k}\geq \zeta_{1}$). In other words, if $\Delta_{k}>\tilde{\Delta}$, $\rho_{k}<\zeta_{1}$ holds. In this case,  $\Delta_{k+1}=\beta_{1}\Delta_{k}>\beta_{1}\tilde{\Delta}$. To summarize the two cases, we then have
\begin{equation}
\label{contradiction}
    \Delta_{k}\geq\min(\Delta_{k-1},\beta_{1}\tilde{\Delta})\geq\cdots\geq\min(\Delta_{\hat{k}},\beta_{1}\tilde{\Delta}),\quad\forall k\geq \hat{k}.
\end{equation}

Then we proved sequence $\{\Delta_{k}\}_{k\geq\hat{k}}^{\infty}$ is bounded from below. Note that there exists an infinite subsequence, denoted by $\mathcal{K}$, of $\{\hat{k}, \hat{k}+1, \cdots\}$ such that, for any $k\in\mathcal{K}$, one of the following two cases holds.

Case 1: $\Delta_{k+1}=\beta_{1}\Delta_{k}$.  It is easy to obtain $\Delta_{k}\rightarrow 0$ as $k\overset{\mathcal{K}}{\rightarrow}\infty$ since $\beta_{1}<1$, which is contradicted to the fact that $\Delta_{k}$ is bounded from below (see \eqref{contradiction}).

Case 2: $\rho_{k}\geq\zeta_{1}$. We have from the definition of $\rho_k$ (see \eqref{rho}) and $\rho_{k}\geq\zeta_{1}$ that
\begin{equation*}
    r(z_{k})-r(z_{k+1})\geq\zeta_{1}(m_{k}(0)-m_{k}(\alpha_{k}))\geq\zeta_{1}\frac{1}{2}\epsilon\min(\Delta_{k},\epsilon/M)>0,
\end{equation*}
where the second inequality follows from Lemma \ref{Lem3} and $\Vert g_{k}\Vert\geq\epsilon$ for $k\in\mathcal{K}$.

Therefore, $\{r(z_k)\}_{k\in\mathcal{K}}$ is strictly decreasing.
Since $\{r(z_k)\}_{k\in\mathcal{K}}$ is bounded from below (note that $r(z) \geq 0$ for any $z$), we know that the sequence $\{r(z_k)\}_{k\in\mathcal{K}}$ is convergent and $r(z_{k})-r(z_{k+1})\downarrow0$ as $k\overset{\mathcal{K}}{\rightarrow}\infty$. Thus, $\Delta_{k} \to 0$ as $k\overset{\mathcal{K}}{\rightarrow}\infty$,
which is also contradicted with \eqref{contradiction}.\qed
\end{proof}

Now we are  ready to prove $\lim_{k\to \infty} \|g_k\|=0$.

\begin{proof} [of  $\, \lim_{k\to \infty} \|g_k\|=0$]  Let
\begin{equation}
\label{gs8}
    \epsilon:=\frac{1}{2}\Vert g_{k}\Vert \quad \text{and} \quad  R:=\min\left(\frac{\epsilon}{M}, R_{0}\right).
\end{equation}
Note that $\mathcal{B}(z_{k},R)=\{z: \Vert z-z_{k}\Vert\leq R\} \subseteq S(R_{0})$, and thus $\nabla r(\cdot)$ is Lipschitz continuous on $\mathcal{B}(z_{k},R)$ with Lipschitz modulus $M$. Thus, for $\forall z\in\mathcal{B}(z_{k},R)$, we have
$$\Vert \nabla r(z)-\nabla r(z_{k})\Vert \leq M \norm{z -z_k} \leq M R = M\min\left(\frac{\epsilon}{M}, R_{0}\right) \leq \epsilon.$$
For $\forall z\in\mathcal{B}(z_{k},R)$, we have by the triangle inequality that
\begin{align*}
    \Vert \nabla r(z)\Vert & \geq \Vert g_k \Vert - \Vert \nabla r(z)-\nabla r(z_{k})\Vert
    = 2\epsilon - \Vert \nabla r(z)-\nabla r(z_{k})\Vert
    \geq2\epsilon-\epsilon
    =\epsilon.
\end{align*}
According to Lemma \ref{lemma_c}, we know that there exists an index $l\geq k$ satisfying $\Vert g_{l+1}\Vert<\epsilon$. Moreover, we assume that $z_{l+1}$ is the first point that iterates out of the ball $\mathcal{B}(z_{k},R)$ after $z_{k}$ as well as satisfying $\Vert g_{l+1}\Vert<\epsilon$.
Thus, $\Vert g_{i}\Vert\geq\epsilon$ for $i=k,k+1,\cdots,l$. Then we have
\begin{equation}
\label{gs6}
\begin{split}
    r(z_{k})-r(z_{l+1})&=\sum_{i=k}^{l}r(z_{i})-r(z_{i+1})
    = \sum_{i=k,\atop z_{i}\neq z_{i+1}}^{l}\rho_i (m_{i}(0)-m_{i}(\alpha_{i}))\\
    &\geq\sum_{i=k,\atop z_{i}\neq z_{i+1}}^{l}\eta(m_{i}(0)-m_{i}(\alpha_{i}))
    \geq \frac{\eta}{2} \epsilon \sum_{i=k,\atop z_{i}\neq z_{i+1}}^{l}\min\left(\Delta_{i},\frac{\epsilon}{M}\right),
    \end{split}
\end{equation}
where the second equality follows from \eqref{rho}, the first inequality follows from $\rho_{i}<\eta$ when $z_{i}\neq z_{i+1}$.  Since $\norm{g_k} = 2\epsilon$ and $\norm{g_{l+1}} <\epsilon$, we have $z_{l+1}\neq z_{k}$, which implies that $\{k,\cdots,l\} \cap \{j: z_{j}\neq z_{j+1}\} \neq \emptyset$.

If $\Delta_{i}\leq\epsilon/M$ for all $i\in \{k,\cdots,l\} \cap \{j: z_{j}\neq z_{j+1}\}$, we continue \eqref{gs6} as follows:
\begin{align*}
r(z_{k})-r(z_{l+1})&\geq \frac{\eta}{2}\epsilon\sum_{i=k,\atop z_{i}\neq z_{i+1}}^{l}\Delta_{i}
\geq \frac{\eta}{2}\epsilon\sum_{i=k}^{l}\norm{z_{i+1} - z_i}\\
&\geq \frac{\eta}{2}\epsilon \norm{z_k - z_{l+1}}
\geq\frac{\eta}{2}\epsilon R=\frac{\eta}{2}\epsilon\min\left(\frac{\epsilon}{M}, R_{0}\right),
\end{align*}
where the second inequality follows from $\norm{z_{i+1} - z_i} \leq \Delta_i$, the third inequality follows from the triangle inequality, the last inequality follows from the fact that $z_{l+1}$ is the first point that iterates out of the ball $\mathcal{B}(z_{k},R)$ after $z_{k}$.

If $\Delta_{i}>\epsilon/M$ for some $i\in \{k,\cdots,l\} \cap \{j: z_{j}\neq z_{j+1}\}$, we continue \eqref{gs6} as follows:
\begin{equation*}
    r(z_{k})-r(z_{l+1})\geq \frac{\eta }{2}\epsilon \sum_{i=k,\atop z_{i}\neq z_{i+1}}^{l} \frac{\epsilon}{M} \geq \frac{\eta }{2}\epsilon \frac{\epsilon}{M},
\end{equation*}
where the last inequality follows from $\{k,\cdots,l\} \cap \{j: z_{j}\neq z_{j+1}\} \neq \emptyset$. To summarize, we obtain
\begin{equation}
\label{gs7}
r(z_{k})-r(z_{l+1}) \geq \frac{\eta}{2}\epsilon\min\left(\frac{\epsilon}{M},R_{0}\right).
\end{equation}

Since the sequence $\{r(z_{i})\}_{i=0}^{\infty}$ is a decreasing and bounded sequence from below, there exists $r^*\geq 0$ such that
$    \lim_{i\rightarrow\infty}r(z_{i})=r^{*}.$ Hence
$$
\begin{aligned}
r(z_{k})-r^{*}&\geq r(z_{k})-r(z_{l+1})\geq\frac{\eta}{2}\epsilon\min\left(\frac{\epsilon}{M},R_{0}\right)
=\frac{\eta}{4}\Vert g_{k}\Vert\min\left(\frac{\Vert g_{k}\Vert}{2M}, R_{0}\right),
\end{aligned}
$$
where the second inequality follows from \eqref{gs7}, the last equality follows from \eqref{gs8}.

Due to the arbitrariness of $k$, by letting $k\rightarrow\infty$, we know
$$\frac{\eta}{4}\Vert g_{k}\Vert\min\left(\frac{\Vert g_{k}\Vert}{2M}, R_{0}\right) \to 0,$$ which implies $\lim_{k\rightarrow\infty}\Vert g_{k}\Vert=0$. Then the proof is complete.\qed
\end{proof}

\end{document}